\renewcommand{\email}[2][]{%
  \ifx\emails\@empty\relax\else{\g@addto@macro\emails{,\space}}\fi%
  \@ifnotempty{#1}{\g@addto@macro\emails{\textrm{(#1)}\space}}%
  \g@addto@macro\emails{#2}%
}
\newcommand{\stt}{\mathfrak{s}_{T, \mathfrak{t}}}
\newcommand{\radfac}{\mathfrak{s}_{T^u, \mathfrak{k}^\perp}}
\newcommand{\svast}{\bBigg@{3}}
\newcommand{\vast}{\bBigg@{4}}
\newcommand{\Vast}{\bBigg@{5}}
\newcommand{\cw}{[T \cup -T ]}
\newcommand{\cww}{T \cup -T }
  \theoremstyle{definition}
  \newtheorem{definition}{Definition}[section]
  \newtheorem{example}[definition]{Example}
\newcommand{\wpl}{\mathfrak{s} \inplus \mathfrak{r}}
  \newtheorem{remark}[definition]{Remark}
  \theoremstyle{plain}
  \newtheorem{lemma}[definition]{Lemma}
  \newtheorem{proposition}[definition]{Proposition}
  \newtheorem{theorem}[definition]{Theorem}
  \newtheorem{corollary}[definition]{Corollary}
    \theoremstyle{definition}
\title[Cyclic wide subalgebras]{Cyclic wide subalgebras of semisimple Lie algebras}
\begin{document}

\author[Andrew Douglas]{Andrew Douglas$^{1,2}$}
\address[]{$^1$Department of Mathematics, New York City College of Technology, City University of New York, Brooklyn, NY, USA.}
\address[]{$^2$Ph.D. Program in Mathematics, CUNY Graduate Center, City University of New York, New York, NY, USA.}

\author[Joe Repka]{Joe Repka$^3$}
\address{$^3$Department of Mathematics, University of Toronto, Toronto, ON,  Canada.}

\date{\today}

\keywords{Levi decomposable subalgebras, cyclic indecomposable modules, wide subalgebras,  regular subalgebras, closed subsets of root systems.} 
\subjclass[2010]{17B05, 17B10, 17B20, 17B22,  17B30}

\begin{abstract}
Let $\wpl$ be a Levi decomposable Lie algebra, with Levi factor $\mathfrak{s}$, and radical $\mathfrak{r}$. A module $V$ of $\wpl$ is 
{\it cyclic indecomposable} if it is indecomposable and the quotient module $V /\mathfrak{r}\cdot V$ is a simple $\mathfrak{s}$-module. 
A Levi decomposable subalgebra of a semisimple Lie algebra is {\it cyclic wide} if the restriction of every simple module of the semisimple Lie algebra to the subalgebra
is cyclic indecomposable. We establish  a condition for a regular Levi decomposable subalgebra of a semisimple Lie algebra to be cyclic wide. 
Then,  in the case of a regular Levi decomposable subalgebra  
whose radical is an ad-nilpotent subalgebra, we show that the  condition is necessary and sufficient for the subalgebra to be cyclic wide.
All Lie algebras, and modules in this article are finite-dimensional, and over the complex numbers.
\end{abstract}

\maketitle

\section{Introduction}

Let $\wpl$ be a Levi decomposable Lie algebra, with Levi factor $\mathfrak{s}$, and radical $\mathfrak{r}$. A module $V$ of $\wpl$ is 
{\it cyclic indecomposable} if it is indecomposable and the quotient module $V /\mathfrak{r}\cdot V$ is a simple $\mathfrak{s}$-module.  Cyclic indecomposable modules were first considered by Piard  in $1986$ \cite{piard} as a means to examine  indecomposable modules of the Levi decomposable algebra $\mathfrak{sl}_2\inplus \mathbb{C}^2$.

A Levi decomposable subalgebra of a semisimple Lie algebra is {\it cyclic wide} if the restriction of every simple module of the semisimple Lie algebra to the subalgebra is cyclic indecomposable.
In \cite{casati}, Casati classified  the cyclic indecomposable modules of $\mathfrak{sl}_{n} \inplus \mathbb{C}^n$. Fundamental to the classification was
the proof that  all subalgebras of $\mathfrak{sl}_{n+1}$ isomorphic to $\mathfrak{sl}_{n} \inplus \mathbb{C}^n$ are cyclic wide,
although the term ``cyclic wide" was not used. 
  Note that all 
such subalgebras were shown to be regular  in \cite{dr15}.
The proof in \cite{casati}, although  elegant,  was also very technical,  employed  the Feigin-Fourier-Littelmann basis for $\mathfrak{sl}_{n+1}$-modules \cite{feigin}, and was tailored to the special case of $\mathfrak{sl}_{n+1}$.  
Note that  \cite{casati} utilizes results of \cite{dr15}.

In \cite{dr15},  Douglas and Repka classified, up to inner automorphism,  Levi decomposable  subalgebras inside simple Lie algebras. 
In particular, they classified subalgebras  isomorphic to subalgebras listed below  in the corresponding ambient simple Lie algebras:
\begin{equation}\label{ghwerr}
\begin{array}{llllllllllll}
\mathfrak{sl}_{n} &\inplus& \mathbb{C}^n&\subset& \mathfrak{sl}_{n+1}, \\
\mathfrak{so}_{2n+1} &\inplus& \mathbb{C}^{2n+1} &\subset& \mathfrak{so}_{2n+3}, \\
\mathfrak{sp}_{2n} &\inplus& \mathbb{C}^{2n}&\subset& \mathfrak{sp}_{2n+2}, ~\text{and}\\
\mathfrak{so}_{2n} &\inplus& \mathbb{C}^{2n}  &\subset& \mathfrak{so}_{2n+2}.
\end{array}
\end{equation}
Further, each subalgebra was shown to be regular.
It was shown, however, that there were no subalgebras isomorphic to $\mathfrak{sp}_{2n} \inplus \mathbb{C}^{2n}$ in $\mathfrak{sp}_{2n+2}$.

In this article, we generalize and extend the aforementioned result of  \cite{casati} regarding cyclic wide subalgebras. In particular, 
we establish a  condition for a regular Levi decomposable subalgebra of a semisimple Lie algebra to be cyclic wide. 
Then, we show this  condition is necessary and sufficient to determine whether or not  regular Levi decomposable subalgebras, 
whose radicals are ad-nilpotent subalgebras, are  cyclic wide.

Our results imply, for instance, that any  Levi decomposable subalgebra isomorphic to a subalgebra  of  the  corresponding ambient simple Lie algebra  in Eq. \eqref{ghwerr} is cyclic wide, including subalgebras of $\mathfrak{sl}_{n+1}$
isomorphic to $\mathfrak{sl}_{n} \inplus \mathbb{C}^n$ considered in  \cite{casati}. The proof of the result makes fundamental use of results in \cite{dr23, panyu}, which will be stated below.

The article is organized as follows. In Section \ref{background}, we review relevant background, and establish terminology, and notation.  Section \ref{cyclic} 
contains the results of the article.

Note that all Lie algebras, and modules in this article are finite-dimensional, and over the complex numbers.

\section{Background, terminology, and notation}\label{background}

In this section, we introduce terminology and notation, and review relevant background on semisimple Lie algebras and their regular subalgebras. 
 We  largely follow the background section of \cite{dr23}.

\subsection{Semisimple Lie algebras}

Let  $\mathfrak{g}$  denote a semisimple Lie algebra, and $\mathfrak{h}$ a fixed Cartan subalgebra of $\mathfrak{g}$. The corresponding root system is denoted $\Phi$, and its Weyl group $\mathcal{W}$. For $\alpha \in \Phi$,  we denote $\mathfrak{g}_\alpha$  the corresponding root space. Let $G$ be the adjoint group of $\mathfrak{g}$. 

Given $\alpha \in \Phi$, we may  fix a nonzero $e_\alpha \in \mathfrak{g}_\alpha$. Then  there 
 is a unique $e_{-\alpha} \in \mathfrak{g}_{-\alpha}$, such that $e_\alpha$, $e_{-\alpha}$, and $h_{\alpha} =[e_\alpha, e_{-\alpha}] \in \mathfrak{h}$
 satisfy the commutation relations $[h_\alpha, e_\alpha]=\alpha(h_\alpha) e_\alpha=2 e_\alpha$, and $[h_\alpha, e_{-\alpha}]=-2 e_{-\alpha}$. 
 Therefore, 
 $e_\alpha$, $e_{-\alpha}$, and $h_\alpha$ form a basis for a subalgebra 
isomorphic to $\mathfrak{sl}_2$.

The positive roots of $\Phi$ are denoted  $\Phi^+$, and $\Delta =\{ \alpha_1,...,\alpha_n\}\subseteq\Phi^+$ is a base of $\Phi$. The elements of $\Delta$ are 
{\it simple roots}.
We define $\Phi^- \coloneqq -\Phi^+$. The {\it rank} of $\mathfrak{g}$  is the cardinality of $\Delta$, which in this case is $n$.

The Weyl group $\mathcal{W}$ is generated by the  reflections $ s_{\alpha_1},..., s_{\alpha_n}$, corresponding to the simple roots of $\Delta$.
If an element $w\in \mathcal{W}$ is written as $s_{\beta_1}\cdots s_{\beta_t}$, such that $\beta_i \in \Delta$  and $t$  is minimal, the expression is {\it reduced}.
Then, we define the {\it length} of $w$, relative to $\Delta$, as $l(w)=t$. There is a unique longest element of $\mathcal{W}$, denoted $w_0$.  Further, 
$w_0$ sends $\Phi^+$ to $\Phi^-$ (see \cite{stek} or \cite{humphreys}).

We denote by $\Lambda$ the set of weights relative to the root system $\Phi$. Further, $\Lambda^+$ is the set of dominant weights with respect to $\Delta$.
Let $\lambda_1,..., \lambda_n$ be the {\it fundamental dominant weights} (relative to $\Delta$).

A dominant  weight $\lambda \in \Lambda^+$ may be written as $\lambda = m_1 \lambda_1+ \cdots +m_n \lambda_n$, where $m_i$ is a nonnegative integer for each $i$. For each dominant weight $\lambda$, $V(\lambda)$ is  the simple $\mathfrak{g}$-module of highest weight $\lambda$. 

For an arbitrary $\mathfrak{g}$-module $V$,  let  $\Pi(V)$ be the set of weights of $V$. Then,
 $V$
decomposes into weight spaces
\begin{equation}
 V = \bigoplus_{\mu \in \Pi(V) } V_\mu,
\end{equation}
where $V_\mu =\{ v\in V ~|~ h \cdot v = \mu(h) v, ~ \text{for all}~ h\in \mathfrak{h} \}$. Fix a highest weight vector $v_\lambda \in V(\lambda)$, unique up to scalar multiple.

The Cartan subalgebra $\mathfrak{h}$ is naturally associated with its dual space $\mathfrak{h}^*$ via the Killing form $\kappa$. In particular,
$\alpha \in \mathfrak{h}^*$ 
corresponds to the unique element $t_\alpha \in \mathfrak{h}$ such that $\alpha(h) = \kappa(t_\alpha, h)$, for all $h \in \mathfrak{h}$. A  nondegenerate symmetric bilinear form  on $\mathfrak{h}^*$ may be defined by
$(\alpha, \beta) \coloneqq \kappa(t_{\alpha}, t_{\beta} )$. Further,  we define  $\langle \alpha, \beta \rangle \coloneqq \frac{2(\alpha, \beta)}{(\beta, \beta)} =\alpha (h_{\beta})$, where 
$h_{\beta} \coloneqq \frac{2t_\beta}{\kappa(t_\beta, t_\beta)}=\frac{2t_\beta}{(\beta, \beta)}$.

\subsection{Closed subsets of root systems and regular subalgebras}

Closed subsets of root systems are fundamentally related to regular subalgebras.  
A subset $T$ of the root system $\Phi$ is {\it closed} if for any
$x, y \in T$, $x+y \in \Phi$ implies $x+y \in T$. 

A closed subset $T$ is a disjoint union of its {\it symmetric} component 
\begin{equation}
T^r =\{\alpha \in T | -\alpha \in T  \},
\end{equation} 
and its {\it special} component 
\begin{equation}
T^u=\{ \alpha \in T | -\alpha  \notin T \}.
\end{equation}
We have the following useful results on closed subsets of root systems.
\begin{lemma}\label{lem:deccl}\cite{sopkina}
$T^r$ is a closed root subsystem  of $\Phi$. For any two roots $\alpha \in T^u$ and $\beta \in T$ such that $\alpha+\beta$ is a root, we have that  $\alpha+\beta \in T^u$. In particular, $T^u$ is closed.
\end{lemma}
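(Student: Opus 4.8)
The plan is to establish the three assertions in turn, each by a short direct argument that uses only the defining closedness of $T$ (and, for the first, the elementary fact that a symmetric closed subset of a root system is a root subsystem). For $T^r$, symmetry is immediate, since $\alpha \in T^r$ means $\alpha,-\alpha \in T$, a condition symmetric in $\alpha$ and $-\alpha$; and if $x,y \in T^r$ with $x+y \in \Phi$, then $x+y \in T$ because $x,y \in T$ and $T$ is closed, while $-(x+y) = (-x)+(-y) \in \Phi$ lies in $T$ because $-x,-y \in T$, so $x+y \in T^r$. To see that $T^r$ is a bona fide root subsystem rather than merely a symmetric closed set, I would note it is stable under each reflection $s_\alpha$ with $\alpha \in T^r$: walking down the $\alpha$-string through any $\beta \in T^r$ one step at a time, every intermediate root lies in $T^r$ by closedness, so in particular $s_\alpha(\beta) \in T^r$.

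For the second assertion, let $\alpha \in T^u$, $\beta \in T$ with $\alpha+\beta \in \Phi$. Closedness of $T$ gives $\alpha+\beta \in T$, so what must be shown is that $\alpha+\beta \notin T^r$, i.e.\ $-(\alpha+\beta) \notin T$. I would argue by contradiction: if $-(\alpha+\beta) \in T$, then since $\beta \in T$ as well and $\bigl(-(\alpha+\beta)\bigr) + \beta = -\alpha$ is a root, closedness of $T$ forces $-\alpha \in T$, contradicting $\alpha \in T^u$. Hence $\alpha+\beta \in T \setminus T^r = T^u$. The third assertion is then immediate: if $x,y \in T^u$ and $x+y \in \Phi$, apply the second assertion with $\alpha=x \in T^u$ and $\beta=y \in T$ to conclude $x+y \in T^u$, so $T^u$ is closed.

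I do not anticipate a genuine obstacle; the argument is elementary throughout. The only point that deserves a moment's attention is justifying that $T^r$ is an honest root subsystem (handled by the $\alpha$-string remark above), and the one small idea carrying the whole lemma is the subtraction identity $\bigl(-(\alpha+\beta)\bigr)+\beta = -\alpha$ used in the second assertion and, by specialization, in the third.
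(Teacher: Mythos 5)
Your argument is correct throughout: symmetry and closedness of $T^r$ follow directly from the definitions, the root-string argument legitimately upgrades $T^r$ to a root subsystem, and the subtraction identity $(-(\alpha+\beta))+\beta=-\alpha$ cleanly yields the statement about $T^u$. The paper itself gives no proof (it cites Sopkina), and your argument is the standard one for this fact, so there is nothing to compare beyond noting that it is complete as written.
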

\begin{lemma}\label{wideclosure}\cite{dr23}
Let $S$ be a closed subset of $\Phi$. Suppose $\beta_1$, $\beta_2$,...,$\beta_k \in S$, and $\beta_1+\beta_2+ \cdots+\beta_k \in \Phi$. Then $\beta_1+\beta_2+\cdots +\beta_k \in S$. 
\end{lemma}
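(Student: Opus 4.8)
I would argue by induction on $k$, peeling off a single summand at a time so as to reduce to the defining ($k=2$) closedness property. The case $k=1$ is trivial, and $k=2$ is precisely the definition of a closed subset; so assume $k\geq 2$, put $\beta\coloneqq\beta_1+\cdots+\beta_k\in\Phi$, and suppose the statement is known for all sums of fewer than $k$ roots of $S$. The key reduction is that it suffices to find an index $i$ with $\beta-\beta_i\in\Phi\cup\{0\}$: if $\beta-\beta_i=0$ then $\beta=\beta_i\in S$; and if $\beta-\beta_i\in\Phi$, then $\beta-\beta_i=\sum_{j\neq i}\beta_j$ is a root written as a sum of $k-1$ elements of $S$, so $\beta-\beta_i\in S$ by the inductive hypothesis, whence $\beta=(\beta-\beta_i)+\beta_i$ is a root that is a sum of two elements of $S$ and therefore lies in $S$ by closedness.

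The heart of the matter is thus the claim that such an index always exists, and this I would prove by contradiction using positivity of the Killing form. Suppose no such $i$ exists. Then $\beta_i\neq\beta$ for every $i$ (else $\beta-\beta_i=0$). Split the indices into $I_-\coloneqq\{\,i:\beta_i=-\beta\,\}$ and $J\coloneqq\{1,\dots,k\}\setminus I_-$. For $i\in J$ we have $\beta_i\neq\pm\beta$ and $\beta-\beta_i\notin\Phi$; the standard fact that two roots $\gamma,\delta$ with $\delta\neq\pm\gamma$ and $\gamma+\delta\notin\Phi$ satisfy $(\gamma,\delta)\geq 0$ (applied with $\gamma=\beta$, $\delta=-\beta_i$; see \cite{humphreys}) gives $(\beta,\beta_i)\leq 0$ for all $i\in J$. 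On the other hand, $\beta=\sum_i\beta_i=-|I_-|\,\beta+\sum_{i\in J}\beta_i$ forces $\sum_{i\in J}\beta_i=(|I_-|+1)\beta$; pairing with $\beta$ and using that $(\cdot,\cdot)$ is positive definite on the real span of the roots yields
\[
0<(|I_-|+1)\,(\beta,\beta)=\sum_{i\in J}(\beta,\beta_i)\leq 0,
\]
a contradiction (and if $J=\emptyset$ the same identity gives $\beta=0\notin\Phi$). This proves the claim and closes the induction.

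The only real subtlety — and the reason this is not an immediate one-line consequence of positivity of the Killing form — is that $S$ need not be symmetric, so $-\beta_i$ need not lie in $S$, and, more to the point, for indices $i$ with $\beta_i=-\beta$ the root-string (equivalently $\mathfrak{sl}_2$) inequality relating $(\beta,\beta_i)$ to membership of $\beta-\beta_i$ in $\Phi$ is simply unavailable. Isolating those indices in $I_-$ and absorbing their contribution into the scalar factor $|I_-|+1$ is exactly what makes the counting go through; the remaining steps are routine. One could instead phrase the inequality $(\beta,\beta_i)\leq 0$ directly in terms of the $\beta_i$-root string through $\beta$, or route the bookkeeping through the decomposition $S=S^r\sqcup S^u$ of Lemma \ref{lem:deccl}, but neither refinement is needed.
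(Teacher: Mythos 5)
The paper states this lemma only as a citation to \cite{dr23} and gives no proof of its own, so there is nothing internal to compare against; judged on its own terms, your argument is correct and complete. The induction, the reduction to finding an index $i$ with $\beta-\beta_i\in\Phi\cup\{0\}$, and the use of the standard fact that nonproportional roots $\gamma,\delta$ with $\gamma+\delta\notin\Phi$ satisfy $(\gamma,\delta)\geq 0$ are all sound, and you are right that the indices with $\beta_i=-\beta$ must be treated separately since the root-string inequality does not apply to proportional roots. The only remark worth making is that the contradiction can be streamlined: arguing directly from $\sum_i(\beta,\beta_i)=(\beta,\beta)>0$, some index satisfies $(\beta,\beta_i)>0$, and for that index either $\beta_i=\beta$ (so $\beta\in S$ outright) or $\beta_i\neq\pm\beta$ (the case $\beta_i=-\beta$ being excluded because it forces $(\beta,\beta_i)<0$), whence $\beta-\beta_i\in\Phi$ by the same Lemma 9.4 of \cite{humphreys} and the induction proceeds. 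This positive-selection phrasing makes the set $I_-$ and the factor $|I_-|+1$ unnecessary, but your bookkeeping is equally valid and the extra care about non-symmetry of $S$ is well placed.
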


With the following definition, we have another relevant result on closed subsets of root systems. Let $S \subseteq \Phi$, then the {\it closure} of $S$, denoted $[S]$, is the smallest closed subset of $\Phi$ containing $S$.  
\begin{lemma}\label{semireg}\cite{dr23}
Let $T$ be a closed subset of $\Phi$. Then, $[T\cup -T]$ is a symmetric closed subset of $\Phi$ containing $T$. 
\end{lemma}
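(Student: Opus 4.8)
The plan is to prove Lemma~\ref{semireg} directly from the defining properties of closedness and from Lemma~\ref{wideclosure}. Write $S := [T \cup -T]$. By definition $[T \cup -T]$ is the smallest closed subset of $\Phi$ containing $T \cup -T$, so $S$ is automatically closed and automatically contains $T$ (indeed it contains $-T$ as well). The only nontrivial assertion is that $S$ is \emph{symmetric}, i.e.\ that $\alpha \in S \implies -\alpha \in S$.

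To establish symmetry, I would argue that the set $-S := \{-\alpha : \alpha \in S\}$ is itself a closed subset of $\Phi$ containing $T \cup -T$, and therefore, by minimality of the closure, $S \subseteq -S$; applying the same inclusion to $-S$ (or simply negating) gives $-S \subseteq S$, hence $S = -S$, which is exactly symmetry. The two things to check for this argument are: (i) $-S$ contains $T \cup -T$, which is immediate since $-(T \cup -T) = (-T) \cup T = T \cup -T \subseteq S$, so $T \cup -T = -(T\cup -T) \subseteq -S$; and (ii) $-S$ is closed: if $x, y \in -S$ with $x + y \in \Phi$, then $-x, -y \in S$ and $(-x) + (-y) = -(x+y) \in \Phi$, so by closedness of $S$ we get $-(x+y) \in S$, i.e.\ $x + y \in -S$. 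Both checks are routine set manipulations.

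Alternatively — and this is perhaps closer to how the authors may phrase it — one can give a constructive description: every element of $S = [T \cup -T]$ is, by Lemma~\ref{wideclosure} applied to the closed set $S$, expressible in terms of sums of elements of $T \cup -T$ that land in $\Phi$, and one shows by induction on the number of summands that whenever $\beta_1 + \cdots + \beta_k \in \Phi$ with each $\beta_i \in T \cup -T$, the negative $-\beta_1 - \cdots - \beta_k = (-\beta_1) + \cdots + (-\beta_k)$ is a sum of elements of $T \cup -T$ lying in $\Phi$, hence also in $S$ by Lemma~\ref{wideclosure}. This again yields $-S \subseteq S$ and thus symmetry. I would favor the first (minimality) argument since it is cleaner and avoids an explicit induction.

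I do not anticipate a genuine obstacle here; the lemma is essentially a formal consequence of the definition of closure together with the elementary observation that negation is an involution of $\Phi$ that preserves the closedness condition. The one point requiring a small amount of care is making sure that the minimality argument is applied in the correct direction (showing $-S$ is an admissible competitor in the "smallest closed set containing $T \cup -T$" description, rather than the reverse), but this is handled by the symmetry of the set $T \cup -T$ itself under negation.
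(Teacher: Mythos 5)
The paper states this lemma without proof, citing \cite{dr23}, so there is no in-text argument to compare against. Your first (minimality) argument is correct and complete: $-S$ is closed because negation preserves the closedness condition, $-S$ contains $T\cup -T$ because that set is its own negation, and minimality of the closure then gives $S\subseteq -S$, hence $S=-S$. Your alternative constructive sketch is also essentially right, though as stated it leans on slightly more than Lemma~\ref{wideclosure} alone: to know that \emph{every} element of $[T\cup -T]$ is a sum of elements of $T\cup -T$ lying in $\Phi$, you need the additional (easy) observation that the set of all such sums is itself closed, so that it coincides with the closure. The minimality argument you favor avoids this point entirely and is the cleaner of the two.
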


Closed subsets $T$ and $T'$ of a root system $\Phi$ are {\it conjugate} if there exists an element $w\in \mathcal{W}$ such that
$w(T)=T'$.

\begin{lemma}\label{lem:1b}\cite{sopkina, bou} 
A special closed subset is conjugate to a subset of $\Phi^+$.
\end{lemma}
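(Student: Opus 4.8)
The plan is to show that any special closed subset $T$ is contained in the set of positive roots determined by some regular element of $\mathfrak{h}$, and then to invoke the (simple) transitivity of $\mathcal{W}$ on positive systems. Concretely, the first and main step is the separation statement: there exists an element $h$ in the real form of $\mathfrak{h}$ (on which all roots are real-valued) with $\alpha(h)>0$ for every $\alpha\in T$. By Gordan's theorem, applied in the real span of $\Phi$ inside $\mathfrak{h}^*$, such an $h$ fails to exist exactly when $0$ can be written as a nontrivial nonnegative linear combination of elements of $T$; so it suffices to rule that out.

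Suppose, for contradiction, that $\sum_{\alpha\in T}c_\alpha\alpha=0$ with all $c_\alpha\ge 0$ and not all zero. Discarding the vanishing coefficients and using that the space of linear relations among the elements of $T$ is defined over $\mathbb{Q}$ (roots have integer coordinates with respect to $\Delta$), we may replace the relation by one with positive integer coefficients: there are distinct roots $\beta_1,\dots,\beta_k\in T$ and integers $n_i\ge 1$ with $\sum_{i=1}^k n_i\beta_i=0$. Since no root is zero, $k\ge 2$. Then
\[
-\beta_1 \;=\; (n_1-1)\beta_1 \;+\; \sum_{i=2}^k n_i\beta_i,
\]
which displays $-\beta_1$ as a sum of at least one root, all of which lie in $T$. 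As $-\beta_1\in\Phi$, Lemma~\ref{wideclosure} forces $-\beta_1\in T$; but $\beta_1\in T$, contradicting the fact that a special closed subset satisfies $T\cap -T=\emptyset$. This proves the separation statement.

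Given such an $h$, I would then upgrade it to a regular element: the set $\{x : \alpha(x)>0 \text{ for all } \alpha\in T\}$ is a nonempty open cone, while the non-regular elements form a finite union of hyperplanes $\ker\beta$, $\beta\in\Phi$; hence the open cone contains a regular $h$. Now $\Phi_h^+:=\{\beta\in\Phi : \beta(h)>0\}$ is a positive system of $\Phi$ containing $T$. Since $\mathcal{W}$ acts simply transitively on the positive systems of $\Phi$, there is $w\in\mathcal{W}$ with $w(\Phi^+)=\Phi_h^+$, and therefore $w^{-1}(T)\subseteq w^{-1}(\Phi_h^+)=\Phi^+$. Thus $T$ is conjugate to the subset $w^{-1}(T)$ of $\Phi^+$, as required.

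The step demanding the most care is the exclusion of a nonnegative relation, and the point worth emphasizing is that this is precisely where closedness (via Lemma~\ref{wideclosure}) is used — the weaker hypothesis $T\cap -T=\emptyset$ alone does not suffice, as the non-closed set $\{\alpha_1,\alpha_2,-\alpha_1-\alpha_2\}$ in type $A_2$ shows. The remaining ingredients — Gordan's theorem, density of regular elements, and simple transitivity of $\mathcal{W}$ on positive systems (equivalently, on Weyl chambers, cf. \cite{humphreys}) — are standard.
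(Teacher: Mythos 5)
Your proof is correct. Note that the paper does not prove Lemma~\ref{lem:1b} at all --- it is imported verbatim from Sopkina and Bourbaki --- so there is no in-paper argument to compare against; what you have written is essentially the classical proof (Bourbaki, Ch.~VI, \S1, no.~7, Prop.~22): show that a closed set $T$ with $T\cap -T=\emptyset$ admits no nontrivial nonnegative linear relation, separate it from $0$ by a linear functional, perturb to a regular element, and use transitivity of $\mathcal{W}$ on positive systems. All the individual steps check out: Gordan's theorem is correctly invoked; the reduction to a positive \emph{integer} relation is justified because the relation space is rational and the nonnegative normalized solutions form a nonempty rational polytope; the identity $-\beta_1=(n_1-1)\beta_1+\sum_{i\ge 2}n_i\beta_i$ together with Lemma~\ref{wideclosure} (which does allow repeated summands) gives the contradiction with specialness; and the passage from an open cone to a regular element and then to a Weyl chamber is standard. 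Your closing remark that closedness is genuinely needed (the non-closed set $\{\alpha_1,\alpha_2,-\alpha_1-\alpha_2\}$ in $A_2$ sums to zero yet meets no positive system) is a nice sanity check and is accurate.
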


Let $T\subseteq \Phi$ be a closed subset, and $\mathfrak{t}$ be a subspace of $\mathfrak{h}$
containing $[\mathfrak{g}_\alpha, \mathfrak{g}_{-\alpha}]$ for each $\alpha \in T^r$. Then
\begin{equation}\label{reggg}
\mathfrak{s}_{T,\mathfrak{t}} = \mathfrak{t} \oplus \bigoplus_{\alpha \in T} \mathfrak{g}_\alpha  
\end{equation}
is a regular subalgebra of $\mathfrak{g}$.  In addition, a regular subalgebra of $\mathfrak{g}$ normalized by $\mathfrak{h}$ may be written in this form.
Further, any regular subalgebra of $\mathfrak{g}$ is conjugate under $G$ to a regular subalgebra normalized by $\mathfrak{h}$. 
This justifies our assumption henceforth that any regular subalgebra is normalized by 
$\mathfrak{h}$, and thus may be written in the form of Eq. \eqref{reggg}.

The Weyl group $\mathcal{W}$ of $\Phi$ naturally acts on the dual space $\mathfrak{h}^*$. Further, by identifying  $\mathfrak{h}$ and $\mathfrak{h}^*$ via the Killing form, the Weyl group also acts on $\mathfrak{h}$.

\begin{proposition}\label{wprop}[ \cite{dougdeg}, Proposition 5.1]
The regular subalgebras $\mathfrak{s}_{T_1, \mathfrak{t}_1}$ and  $\mathfrak{s}_{T_2, \mathfrak{t}_2}$ are conjugate under $G$ if and only if there is a $w \in \mathcal{W}$ with 
$w(T_1)=w(T_2)$ and $w( \mathfrak{t}_1)=w( \mathfrak{t}_2)$.
\end{proposition}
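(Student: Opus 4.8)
The plan is to prove the "if and only if" by handling the two directions separately, with the forward direction (conjugacy implies the combinatorial condition) being essentially definitional, and the reverse direction (the combinatorial condition implies conjugacy) being the substantive part. First I would fix notation: write $\mathfrak{s}_i \coloneqq \mathfrak{s}_{T_i,\mathfrak{t}_i}$ for $i=1,2$, recall that each $\mathfrak{s}_i$ is normalized by $\mathfrak{h}$ and decomposes as in Eq.~\eqref{reggg}, and recall that an element $w\in\mathcal{W}$ acts on $\mathfrak{g}$ by an automorphism in $G$ (since $\mathcal{W}=N_G(\mathfrak{h})/Z_G(\mathfrak{h})$ for the adjoint group) that sends $\mathfrak{h}$ to $\mathfrak{h}$ and $\mathfrak{g}_\alpha$ to $\mathfrak{g}_{w(\alpha)}$, and whose restriction to $\mathfrak{h}$ agrees, under the Killing-form identification $\mathfrak{h}\cong\mathfrak{h}^*$, with the standard Weyl action on $\mathfrak{h}^*$.

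For the reverse direction, suppose such a $w$ exists (note: the hypothesis should read $w(T_1)=T_2$, which I will assume; I would flag the apparent typo in the statement). Pick a representative $\dot w\in N_G(\mathfrak{h})$. Then $\dot w(\mathfrak{g}_\alpha)=\mathfrak{g}_{w(\alpha)}$ for every $\alpha\in\Phi$, so
\[
\dot w(\mathfrak{s}_1)=\dot w(\mathfrak{t}_1)\oplus\bigoplus_{\alpha\in T_1}\dot w(\mathfrak{g}_\alpha)
=w(\mathfrak{t}_1)\oplus\bigoplus_{\alpha\in T_1}\mathfrak{g}_{w(\alpha)}
=\mathfrak{t}_2\oplus\bigoplus_{\beta\in w(T_1)}\mathfrak{g}_\beta
=\mathfrak{t}_2\oplus\bigoplus_{\beta\in T_2}\mathfrak{g}_\beta=\mathfrak{s}_2,
\]
using $w(\mathfrak{t}_1)=\mathfrak{t}_2$ and $w(T_1)=T_2$ and re-indexing the sum by $\beta=w(\alpha)$. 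Hence $\mathfrak{s}_1$ and $\mathfrak{s}_2$ are conjugate under $G$.

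For the forward direction, suppose $g\in G$ satisfies $g(\mathfrak{s}_1)=\mathfrak{s}_2$. The idea is to upgrade $g$ to a Weyl group element. Since $\mathfrak{s}_1$ is normalized by $\mathfrak{h}$, the algebra $\mathfrak{s}_2=g(\mathfrak{s}_1)$ is normalized by $g(\mathfrak{h})$; but $\mathfrak{s}_2$ is also normalized by $\mathfrak{h}$, so both $\mathfrak{h}$ and $g(\mathfrak{h})$ are Cartan subalgebras of $\mathfrak{g}$ contained in the normalizer $\mathfrak{q}\coloneqq N_{\mathfrak{g}}(\mathfrak{s}_2)$. Here I would invoke (or reprove) the standard fact that $\mathfrak{h}$ and $g(\mathfrak{h})$, being two Cartan subalgebras of the (possibly non-reductive) algebra $\mathfrak{q}$ — more carefully, two maximal toral subalgebras sitting inside $\mathfrak{q}$ and of the same dimension $\mathrm{rk}\,\mathfrak{g}$ — are conjugate by an inner automorphism of $\mathfrak{q}$; composing $g$ with such an automorphism (which fixes $\mathfrak{s}_2$ setwise) we may assume $g(\mathfrak{h})=\mathfrak{h}$, i.e. $g\in N_G(\mathfrak{h})$, which maps onto an element $w\in\mathcal{W}$. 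Then $g=\dot w$ acts as $\mathfrak{g}_\alpha\mapsto\mathfrak{g}_{w(\alpha)}$ and as $w$ on $\mathfrak{h}$, so comparing the root-space decompositions of $g(\mathfrak{s}_1)$ and $\mathfrak{s}_2$ forces $w(T_1)=T_2$; and comparing the $\mathfrak{h}$-components forces $w(\mathfrak{t}_1)=\mathfrak{t}_2$.

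The main obstacle is the conjugacy-of-Cartan-subalgebras step in the forward direction: one must argue that the two maximal toral subalgebras $\mathfrak{h}$ and $g(\mathfrak{h})$ of $\mathfrak{g}$, both of which normalize $\mathfrak{s}_2$, can be conjugated to one another by an automorphism preserving $\mathfrak{s}_2$. The cleanest route is to pass to the normalizer $\mathfrak{q}=N_{\mathfrak{g}}(\mathfrak{s}_2)$, observe it is an algebraic subalgebra of $\mathfrak{g}$ containing both, note that $\mathfrak{h}$ and $g(\mathfrak{h})$ are each maximal toral in $\mathfrak{q}$ of dimension $\mathrm{rk}\,\mathfrak{g}$ (maximality in $\mathfrak{q}$ because a strictly larger torus would be a larger torus in $\mathfrak{g}$), and apply the conjugacy theorem for maximal tori in the algebraic group $N_G(\mathfrak{s}_2)^\circ$. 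Since this Proposition is quoted verbatim from \cite{dougdeg}, in the final text I would keep the forward-direction argument brief and cite that source for the details, devoting the bulk of the written proof to the (short) reverse direction which is what is actually used downstream.
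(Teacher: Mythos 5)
The paper gives no proof of this proposition --- it is imported verbatim from \cite{dougdeg} (Proposition 5.1) --- so there is nothing in-paper to compare against; your reconstruction is correct and follows the same standard route as the cited source: the reverse direction by lifting $w$ to $\dot w\in N_G(\mathfrak{h})$ and matching root-space decompositions, and the forward direction by conjugating the two maximal tori $\mathfrak{h}$ and $g(\mathfrak{h})$ inside the identity component of $N_G(\mathfrak{s}_2)$ so as to replace $g$ by an element of $N_G(\mathfrak{h})$. You are also right to flag the typo: as printed, $w(T_1)=w(T_2)$ would force $T_1=T_2$, and the intended condition is $w(T_1)=T_2$, $w(\mathfrak{t}_1)=\mathfrak{t}_2$.
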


By Levi's theorem, a Lie algebra must be either semisimple, solvable, or Levi decomposable [\cite{levi}, Chapter II, Section $2$]. 
Given a non-empty, symmetric closed subset $T$, and a subalgebra $\mathfrak{t}$ of $\mathfrak{h}$ generated by $[\mathfrak{g}_\alpha, \mathfrak{g}_{-\alpha}]$ for all $\alpha \in T\cap \Phi^+$, then $\mathfrak{s}_{T, \mathfrak{t}}$ is a regular semisimple Lie algebra. 
If $T$ is a special closed subset of $\Phi$ (possibly empty), and $\mathfrak{t}$ is a subalgebra of $\mathfrak{h}$, then $\mathfrak{s}_{T,\mathfrak{t}}$ is 
regular solvable subalgebra.

Regular Levi decomposable subalgebras are the focus of the current article.
 A regular Levi decomposable subalgebra is a non-semisimple regular subalgebra $\mathfrak{s}_{T, \mathfrak{t}}$, 
such that $T$ is a closed subset, $T^r$ is the corresponding non-empty, symmetric closed subset, and $T^u$ is the corresponding  special closed subset of $\Phi$, and  $\mathfrak{t}$ is  a subalgebra of $\mathfrak{h}$ containing $[\mathfrak{g}_\alpha, \mathfrak{g}_{-\alpha} ]$ for each $\alpha \in T^r$.

Considering Lemma \ref{lem:deccl}, a regular Levi decomposable subalgebra may be written as $\stt =\mathfrak{s}_{T^r, \mathfrak{k}} \inplus \mathfrak{s}_{T^u, \mathfrak{k}^\perp}$, where $\mathfrak{k}$ is generated by $[\mathfrak{g}_\alpha, \mathfrak{g}_{-\alpha}]$ for all $\alpha \in T^r$; and $\mathfrak{k}^\perp =\{ h\in \mathfrak{t} ~|~ \kappa(h,h') = 0,~\text{for all}~ h'\in \mathfrak{k} \}$. Since our definition of a regular Levi decomposable subalgebra $\stt$ 
requires a non-trivial Levi factor, and excludes the 
possibility that $\stt$ is semisimple, we must have
\begin{equation}\label{levipos}
T^u \neq \emptyset; ~\text{or} ~ T^u = \emptyset ~\text{and}~ \mathfrak{k}^\perp\neq \{0\}.
\end{equation}
Note that $\mathfrak{s}_{T^r, \mathfrak{k}}$ is the Levi factor of $\stt$, and $\mathfrak{s}_{T^u, \mathfrak{k}^\perp}$ is the radical of $\stt$.
As a special case, if $\mathfrak{k}^\perp=\{0\}$, then $\stt =\mathfrak{s}_{T^r, \mathfrak{t}} \inplus \mathfrak{s}_{T^u, 0}$.
Note also that regular Levi decomposable subalgebras $\stt=\mathfrak{s}_{T^r,\mathfrak{k}} \inplus \mathfrak{s}_{T^u,\mathfrak{k}^\perp}$ of $\mathfrak{g}$, whose radicals $\mathfrak{s}_{T^u,\mathfrak{k}^\perp}$ are  ad-nilpotent subalgebras of $\mathfrak{g}$, are precisely those with $\mathfrak{k}^\perp=\{0\}$.

A subalgebra  of a semisimple Lie algebra  is {\it wide} if every simple module of the semisimple Lie algebra remains indecomposable when restricted to the subalgebra. 
In particular, a cyclic wide subalgebra is wide. We record one additional crucial result establishing necessary and sufficient conditions for a regular subalgebra to be wide.

\begin{theorem}\label{lwideb}\cite{dr23, panyu}
Let $T$ be  a closed subset of $\Phi$, and $\mathfrak{t}$  a subalgebra of $\mathfrak{h}$ containing $[\mathfrak{g}_\alpha, \mathfrak{g}_{-\alpha} ]$ for each $\alpha \in T^r$.  Then, $\mathfrak{s}_{T, \mathfrak{t}}$ is wide if and only if $[T \cup -T] =\Phi$. 
\end{theorem}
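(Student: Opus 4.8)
The plan is to prove the two implications separately, and the substance of the argument lies entirely in one direction, namely that $[T\cup -T] = \Phi$ implies $\mathfrak{s}_{T,\mathfrak{t}}$ is wide; the converse is the easy direction. First I would dispose of the converse by contraposition: suppose $[T\cup -T] \neq \Phi$. By Lemma \ref{semireg}, $[T\cup -T]$ is a symmetric closed subset of $\Phi$ properly contained in $\Phi$, hence $\mathfrak{s}_{[T\cup -T], \mathfrak{h}}$ (or rather the semisimple regular subalgebra it spans together with the appropriate toral part) is a proper regular semisimple subalgebra, and $\mathfrak{s}_{T,\mathfrak{t}} \subseteq \mathfrak{s}_{[T\cup -T], \mathfrak{h}}$. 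It then suffices to exhibit a single simple $\mathfrak{g}$-module whose restriction to $\mathfrak{s}_{[T\cup -T],\mathfrak{h}}$ (and a fortiori to $\mathfrak{s}_{T,\mathfrak{t}}$) is decomposable. The adjoint representation is the natural candidate when $\mathfrak{g}$ is simple: since $[T\cup-T]$ misses some root $\gamma$, the $\mathfrak{s}_{[T\cup-T],\mathfrak{h}}$-submodule generated by a highest weight vector cannot be all of $\mathfrak{g}$ unless $[T\cup-T]$ already generates every root space under bracketing, which Lemma \ref{wideclosure} shows it does not. More robustly, one invokes the results of \cite{dr23, panyu} directly: any proper regular semisimple (or toral-augmented) subalgebra fails to be wide because a suitable fundamental or adjoint module decomposes. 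I would cite the relevant statement from \cite{panyu} here rather than reprove it.

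For the forward direction, assume $[T\cup -T] = \Phi$ and let $V = V(\lambda)$ be an arbitrary simple $\mathfrak{g}$-module; I must show $V|_{\mathfrak{s}_{T,\mathfrak{t}}}$ is indecomposable. The key structural observation is that, by Lemma \ref{lem:deccl}, $\mathfrak{s}_{T,\mathfrak{t}} = \mathfrak{s}_{T^r,\mathfrak{k}} \inplus \mathfrak{s}_{T^u,\mathfrak{k}^\perp}$ with Levi factor $\mathfrak{l} := \mathfrak{s}_{T^r,\mathfrak{k}}$ and nilpotent-ish radical spanned by the root spaces $\mathfrak{g}_\alpha$, $\alpha \in T^u$. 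The standard trick for proving indecomposability of a restricted module is to show that its socle (or its top) is simple, equivalently that $\operatorname{End}_{\mathfrak{s}_{T,\mathfrak{t}}}(V)$ contains no nontrivial idempotents. I would argue as follows: suppose $V = V_1 \oplus V_2$ as $\mathfrak{s}_{T,\mathfrak{t}}$-modules. Each $V_i$ is in particular an $\mathfrak{h}$-submodule (since $\mathfrak{h} \supseteq \mathfrak{t}$ acts, but more carefully $\mathfrak{t}$ need not be all of $\mathfrak{h}$ — one uses that $\mathfrak{t}$ contains enough coroots, or passes to weight considerations). The highest weight vector $v_\lambda$ lies in one summand, say $V_1$. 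Now comes the crux: I claim the cyclic $\mathfrak{s}_{T,\mathfrak{t}}$-module generated by $v_\lambda$ is all of $V$. Since $[T\cup -T] = \Phi$, for every root $\beta$ the root vector $e_\beta$ (up to scalar) is expressible via iterated brackets of root vectors $e_{\alpha_1}, \dots, e_{\alpha_m}$ with each $\alpha_j \in T \cup -T$, by Lemma \ref{wideclosure} applied inside the closed set $T \cup -T$ whose closure is $\Phi$. Hence the associative subalgebra of $U(\mathfrak{g})$ generated by $\{e_\beta : \beta \in T\} \cup \{e_{-\alpha} : \alpha \in T^r\} \cup \mathfrak{t}$ — wait, more precisely the subalgebra generated by $\mathfrak{s}_{T,\mathfrak{t}}$ — acts on $v_\lambda$ to produce vectors of every weight $\mu \in \Pi(V)$ and in fact spans each weight space, because $V$ is already generated over $U(\mathfrak{g})$ by $v_\lambda$ and $U(\mathfrak{g})$ is generated (as an algebra, modulo the Cartan) by root vectors in $T \cup -T$.

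Let me refine the crucial step, since generation of $U(\mathfrak{g})$ by root vectors in $T\cup -T$ together with $\mathfrak{t}$ requires care: $T\cup -T$ spans all root spaces, and the brackets $[\mathfrak{g}_\alpha,\mathfrak{g}_{-\alpha}]$ for $\alpha$ ranging over $T^r$ together with those arising from $\alpha \in T^u$ (note $-\alpha \notin T$, but if $\beta = -\alpha \in T\cup-T$ can be written in the closure, one recovers $h_\alpha$) generate $\mathfrak{h}$; since $[T\cup -T] = \Phi$ is symmetric, for each simple coroot $h_{\alpha_i}$ one has $h_{\alpha_i} \in [\mathfrak{g}_{\alpha_i},\mathfrak{g}_{-\alpha_i}]$ and both $\pm\alpha_i$ lie in $[T\cup -T]$, so $h_{\alpha_i}$ is a bracket of elements of $\mathfrak{s}_{T\cup-T,\mathfrak{h}}$. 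Thus $\mathfrak{s}_{[T\cup-T],\mathfrak{h}}$ — which has full Cartan — equals $\mathfrak{g}$, and the subalgebra generated by $\mathfrak{s}_{T,\mathfrak{t}}$ inside $\mathfrak{g}$ is all of $\mathfrak{g}$ (one checks the toral deficiency $\mathfrak{h}\setminus\mathfrak{t}$ is recovered by bracketing root vectors now available). Consequently $U(\mathfrak{s}_{T,\mathfrak{t}}) \cdot v_\lambda = U(\mathfrak{g})\cdot v_\lambda = V$, so $V_1 = V$ and $V_2 = 0$: $V$ is indecomposable when restricted, i.e.\ $\mathfrak{s}_{T,\mathfrak{t}}$ is wide. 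The main obstacle I anticipate is precisely this toral bookkeeping — ensuring that even when $\mathfrak{t} \subsetneq \mathfrak{h}$ the Lie algebra generated by $\mathfrak{s}_{T,\mathfrak{t}}$ still recovers enough of $\mathfrak{h}$ to force $U(\mathfrak{s}_{T,\mathfrak{t}})\cdot v_\lambda$ onto every weight space — and this is exactly where one leans on Lemmas \ref{lem:deccl}, \ref{wideclosure}, \ref{semireg} and the cited results of \cite{dr23, panyu}, which presumably handle this subtlety; I would structure the write-up to quote their precise statement rather than re-derive it.
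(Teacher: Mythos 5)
First, note that the paper does not actually prove Theorem \ref{lwideb}: it is imported verbatim from \cite{dr23, panyu} and used as a black box, so there is no internal proof to compare against. Judged on its own merits, your attempt has a fatal gap in the hard direction. The central claim --- that because $[T\cup -T]=\Phi$ the Lie algebra generated by $\mathfrak{s}_{T,\mathfrak{t}}$ is all of $\mathfrak{g}$, whence $U(\mathfrak{s}_{T,\mathfrak{t}})\cdot v_\lambda=U(\mathfrak{g})\cdot v_\lambda=V$ --- is false. The subalgebra $\mathfrak{s}_{T,\mathfrak{t}}=\mathfrak{t}\oplus\bigoplus_{\alpha\in T}\mathfrak{g}_\alpha$ contains the root vector $e_\alpha$ only for $\alpha\in T$, not for $\alpha\in -T$; and since $T$ is closed and $\mathfrak{t}\supseteq[\mathfrak{g}_\alpha,\mathfrak{g}_{-\alpha}]$ for $\alpha\in T^r$, the set $\mathfrak{s}_{T,\mathfrak{t}}$ is already closed under the bracket, so the Lie algebra it generates is itself. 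Your computation of $h_{\alpha_i}$ as a bracket of elements of $\mathfrak{s}_{T\cup -T,\mathfrak{h}}$ silently replaces the actual subalgebra by the much larger symmetric one $\mathfrak{s}_{[T\cup -T],\mathfrak{h}}$; that is exactly the conflation the closure operation is designed to measure, and it cannot be waved away. Concretely, in $\mathfrak{sl}_3$ take $T=\{\alpha_1,-\alpha_2\}$ (closed, since $\alpha_1-\alpha_2\notin\Phi$) and $\mathfrak{t}=0$. Then $[T\cup -T]=[\{\pm\alpha_1,\pm\alpha_2\}]=\Phi$, so the theorem asserts $\mathfrak{s}_{T,0}$ is wide; but $U(\mathfrak{s}_{T,0})\cdot v_\lambda$ only contains weights of the form $\lambda+a\alpha_1-b\alpha_2$ with $a,b\geq 0$, which misses, e.g., $\lambda-\alpha_1$. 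So $V(\lambda)$ is not cyclic over the subalgebra, and your entire strategy (indecomposability via cyclicity from $v_\lambda$) cannot establish the general statement. It works only in the special case $T\cup -T=\Phi$ (no closure needed), where after conjugating $T^u$ into $\Phi^-$ one gets $\Phi^-\subseteq T$ and hence $V=U(\mathfrak{n}^-)v_\lambda\subseteq U(\mathfrak{s}_{T,\mathfrak{t}})v_\lambda$ --- which is essentially the situation exploited in the paper's Lemma \ref{lemma} and Theorem \ref{theorem1}, but is strictly weaker than Theorem \ref{lwideb}.

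Two smaller issues: (i) even where cyclicity does hold, your step ``$v_\lambda$ lies in one summand $V_i$'' needs an argument, since the summands are a priori only $\mathfrak{t}$-stable and $\mathfrak{t}$-weights need not separate the $\mathfrak{h}$-weight spaces (you flag this but do not resolve it); note also that cyclicity alone does not imply indecomposability over a non-local algebra, so placing the generator inside one summand is essential, not cosmetic. (ii) In the converse direction, the appeal to Lemma \ref{wideclosure} to show the adjoint module decomposes is not a proof; the clean route is that $\mathfrak{s}_{T,\mathfrak{t}}\subseteq\mathfrak{s}_{[T\cup -T],\mathfrak{h}}$, the latter is a proper \emph{reductive} regular subalgebra on which every finite-dimensional $\mathfrak{g}$-module is completely reducible, so it suffices to exhibit one $V(\lambda)$ that is not simple over it --- but you should actually exhibit it rather than defer. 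Given that the genuinely hard implication requires the machinery of \cite{panyu} (or \cite{dr23}), the honest options are either to cite the result outright, as the paper does, or to supply an argument that handles root vectors reachable only through the closure $[T\cup -T]$ and not through $T$ itself; your draft does neither.
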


Note that necessary and sufficient conditions for essentially all regular solvable subalgebras to be wide were established in \cite{panyu}. The result was extended
to all regular subalgebras in \cite{dr23}, including the regular Levi decomposable subalgebras, which are the focus of the present article.

\section{Cyclic wide subalgebras} \label{cyclic}

In this section, we establish  a condition for a regular 
  Levi decomposable subalgebra of a semisimple Lie algebra 
 $\mathfrak{g}$ 
 to be  cyclic wide (Theorem \ref{theorem1}).  
Then, we 
consider regular Levi decomposable subalgebras  
whose radicals are ad-nilpotent subalgebras, and show that the 
condition is necessary and sufficient to determine whether or not such subalgebras  
are  cyclic wide (Theorem \ref{theoremtwo}).
 Several examples are presented to
 illustrate the results.  
We begin with three lemmas.

\begin{lemma}\label{lemma3}
A  regular Levi decomposable subalgebra $\mathfrak{s}_{\widetilde{T}, \mathfrak{\widetilde{t}}}$ of $\mathfrak{g}$ is conjugate under the adjoint group $G$ of $\mathfrak{g}$ to 
a  Levi decomposable subalgebra $\stt= \mathfrak{s}_{T^r,\mathfrak{k}} \inplus \mathfrak{s}_{T^u,\mathfrak{k}^\perp}$, such that 
\begin{equation}
T^u\neq \emptyset~\text{and}~  T^u \subseteq \Phi^-; ~\text{or}~ T^u=\emptyset~\text{and}~ \mathfrak{k}^\perp\neq \{0\}.
\end{equation} 
\end{lemma}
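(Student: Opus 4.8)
The plan is to reduce the given regular Levi decomposable subalgebra to the desired form by applying a suitable Weyl group element, using Proposition \ref{wprop} to ensure that conjugacy under $G$ is preserved. We start from $\mathfrak{s}_{\widetilde T,\widetilde{\mathfrak t}}$ with $\widetilde T$ closed, $\widetilde T^r$ non-empty symmetric, $\widetilde T^u$ special, and $\widetilde{\mathfrak t}$ a subalgebra of $\mathfrak h$ containing $[\mathfrak g_\alpha,\mathfrak g_{-\alpha}]$ for each $\alpha\in\widetilde T^r$. By Eq. \eqref{levipos}, either $\widetilde T^u\neq\emptyset$, or $\widetilde T^u=\emptyset$ and the corresponding $\mathfrak k^\perp\neq\{0\}$. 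In the latter case there is nothing to do: take $w$ to be the identity. So the main content is the case $\widetilde T^u\neq\emptyset$, where we must find $w\in\mathcal W$ with $w(\widetilde T^u)\subseteq\Phi^-$ (and then set $T^r=w(\widetilde T^r)$, $\mathfrak t=w(\widetilde{\mathfrak t})$, $\mathfrak k=w(\widetilde{\mathfrak k})$, $\mathfrak k^\perp=w(\widetilde{\mathfrak k}^\perp)$, noting that the splitting of $\mathfrak t$ into $\mathfrak k\oplus\mathfrak k^\perp$ is $\mathcal W$-equivariant because $\mathcal W$ acts by isometries of $\kappa$ and permutes the root spaces compatibly).

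First I would recall, via Lemma \ref{lem:1b}, that the special closed subset $\widetilde T^u$ is conjugate to a subset of $\Phi^+$: there is $w_1\in\mathcal W$ with $w_1(\widetilde T^u)\subseteq\Phi^+$. Then I would compose with the longest element $w_0$ of $\mathcal W$, which sends $\Phi^+$ to $\Phi^-$, so that $w\coloneqq w_0 w_1$ satisfies $w(\widetilde T^u)\subseteq\Phi^-$. Since $w(\widetilde T^u)$ is still special and still non-empty (Weyl group elements are bijections on $\Phi$ and preserve the property $-\alpha\notin T$ up to the relabeling $T\mapsto w(T)$), and since $w(\widetilde T)=w(\widetilde T^r)\sqcup w(\widetilde T^u)$ remains closed (conjugation by $\mathcal W$ preserves closedness: $x+y\in\Phi$ iff $w^{-1}x+w^{-1}y\in\Phi$), the image $\mathfrak s_{w(\widetilde T),\,w(\widetilde{\mathfrak t})}$ is again a regular Levi decomposable subalgebra of the required shape. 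Proposition \ref{wprop} then guarantees that $\mathfrak s_{\widetilde T,\widetilde{\mathfrak t}}$ and $\mathfrak s_{w(\widetilde T),\,w(\widetilde{\mathfrak t})}$ are conjugate under $G$.

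The one point that needs a little care — and which I expect to be the main (though modest) obstacle — is verifying that applying $w$ genuinely yields a subalgebra of the form $\stt=\mathfrak s_{T^r,\mathfrak k}\inplus\mathfrak s_{T^u,\mathfrak k^\perp}$ as defined in Section \ref{background}, i.e. that $w(\widetilde{\mathfrak t})$ still contains $[\mathfrak g_\beta,\mathfrak g_{-\beta}]$ for every $\beta\in w(\widetilde T^r)$, and that the internal decomposition $\mathfrak t=\mathfrak k\oplus\mathfrak k^\perp$ transforms correctly. For the first, if $\beta=w(\alpha)$ with $\alpha\in\widetilde T^r$, then $w$ maps $[\mathfrak g_\alpha,\mathfrak g_{-\alpha}]\subseteq\widetilde{\mathfrak t}$ onto $[\mathfrak g_{w\alpha},\mathfrak g_{-w\alpha}]=[\mathfrak g_\beta,\mathfrak g_{-\beta}]$, since the adjoint action of a representative of $w$ in $G$ sends $\mathfrak g_\alpha$ to $\mathfrak g_{w\alpha}$; hence $[\mathfrak g_\beta,\mathfrak g_{-\beta}]\subseteq w(\widetilde{\mathfrak t})$. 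For the second, $\mathfrak k$ is by definition the span of these brackets over $T^r$, and $\mathfrak k^\perp$ is the $\kappa$-orthogonal complement of $\mathfrak k$ inside $\mathfrak t$; since $\kappa$ is $\mathcal W$-invariant, $w(\widetilde{\mathfrak k}^\perp)=w(\widetilde{\mathfrak k})^\perp$ inside $w(\widetilde{\mathfrak t})$, so the Levi-radical splitting is preserved. Finally, in the case $\widetilde T^u=\emptyset$ one observes that the condition $\mathfrak k^\perp\neq\{0\}$ is likewise $\mathcal W$-stable, so no adjustment is needed. This completes the reduction.
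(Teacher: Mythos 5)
Your proposal is correct and follows essentially the same route as the paper: invoke Lemma \ref{lem:1b} to conjugate $\widetilde{T}^u$ into $\Phi^+$, compose with the longest element $w_0$ to land in $\Phi^-$, and appeal to Proposition \ref{wprop} for conjugacy under $G$. The extra verifications you supply (that closedness, the special/symmetric split, and the $\mathfrak{k}\oplus\mathfrak{k}^\perp$ decomposition are all preserved by the Weyl group action) are correct and are simply left implicit in the paper's proof.
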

\begin{proof}
Let  $\mathfrak{s}_{\widetilde{T}, \mathfrak{\widetilde{t}}}=\mathfrak{s}_{\widetilde{T}^r, \mathfrak{\widetilde{k}}} \inplus \mathfrak{s}_{\widetilde{T}^u, \mathfrak{\widetilde{k}}^\perp}$ be a 
regular Levi decomposable subalgebra, for the subalgebra $\mathfrak{\widetilde{k}}$ of $\mathfrak{\widetilde{t}}$ 
 generated by $[\mathfrak{g}_\alpha, \mathfrak{g}_{-\alpha}]$, for all $\alpha  \in \widetilde{T}^r$. We assume that it's not the case that $\widetilde{T}^u=\emptyset$ and $\mathfrak{\widetilde{k}}^\perp\neq \{0\}$, for otherwise there is nothing to prove. Thus, we have $\widetilde{T}^u \neq \emptyset$, considering Eq. \eqref{levipos}.

By Lemma \ref{lem:1b}, there exists $w'$ in the Weyl group  $\mathcal{W}$ such that $w'(\widetilde{T}^u) \subseteq \Phi^+$. Then,  applying
the longest element $w_0$ of $\mathcal{W}$ (relative to $\Delta$), we have $w_0 \circ w'(\widetilde{T}^u) \subseteq \Phi^-$ (see \cite{stek} or \cite{humphreys}).  
Hence, 
\begin{equation}
\mathfrak{s}_{w_0\circ w'(\widetilde{T}),  w_0 \circ w'(\mathfrak{\widetilde{t}})}   =   \mathfrak{s}_{w_0 \circ w'(\widetilde{T}^r),w_0 \circ w'(\mathfrak{\widetilde{k}})} \inplus \mathfrak{s}_{w_0 \circ w'(\widetilde{T}^u),w_0 \circ w'(\mathfrak{\widetilde{k}}^\perp)}
\end{equation}
has the desired property  that the radical is determined by a closed subset $w_0 \circ w'(\widetilde{T}^u)$ contained in $\Phi^-$.
And, by Proposition \ref{wprop}, it is conjugate under $G$ to $\mathfrak{s}_{ \widetilde{T}, \mathfrak{\widetilde{t}}}$.
\end{proof}

\begin{lemma}\label{lemma22}
A  regular Levi decomposable subalgebra   $\mathfrak{s}_{\widetilde{T}, \mathfrak{\widetilde{t}}}$, such that  $\widetilde{T}\cup -\widetilde{T}=\Phi$, is 
conjugate under the adjoint group $G$ of $\mathfrak{g}$ to 
a  Levi decomposable subalgebra $\stt= \mathfrak{s}_{T^r,\mathfrak{k}} \inplus \mathfrak{s}_{T^u,\mathfrak{k}^\perp}$, such that 
\begin{equation}
T^u\neq \emptyset~\text{and}~  T^u \subseteq \Phi^-.
\end{equation} 
\end{lemma}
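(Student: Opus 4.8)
The plan is to reduce Lemma \ref{lemma22} to Lemma \ref{lemma3}. The hypothesis $\widetilde{T}\cup -\widetilde{T}=\Phi$ is stronger than what Lemma \ref{lemma3} assumes, so the only genuinely new content is to rule out the second alternative in the conclusion of Lemma \ref{lemma3}, namely the case $\widetilde{T}^u=\emptyset$ and $\mathfrak{\widetilde{k}}^\perp\neq\{0\}$. First I would note that applying a Weyl group element $w$ to both $\widetilde{T}$ and $\mathfrak{\widetilde{t}}$ does not affect the hypothesis: since $w$ permutes $\Phi$ and commutes with negation, $w(\widetilde{T})\cup -w(\widetilde{T})=w(\widetilde{T}\cup-\widetilde{T})=w(\Phi)=\Phi$. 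Hence the property $\widetilde{T}\cup-\widetilde{T}=\Phi$ is preserved under the conjugation used in Lemma \ref{lemma3}, and it also implies (indeed is equivalent to, by the definition of closure and Lemma \ref{semireg}) that $[\widetilde{T}\cup-\widetilde{T}]=\Phi$.

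The key step is then to show that $\widetilde{T}\cup -\widetilde{T}=\Phi$ forces $\widetilde{T}^u\neq\emptyset$. Suppose instead $\widetilde{T}^u=\emptyset$; then $\widetilde{T}=\widetilde{T}^r$ is a symmetric closed subset, so $\widetilde{T}\cup-\widetilde{T}=\widetilde{T}$, and the hypothesis gives $\widetilde{T}=\Phi$. But then $\widetilde{T}^r=\Phi$, and $\mathfrak{\widetilde{t}}$ must contain $[\mathfrak{g}_\alpha,\mathfrak{g}_{-\alpha}]$ for every $\alpha\in\Phi$; these span all of $\mathfrak{h}$, so $\mathfrak{\widetilde{t}}=\mathfrak{h}=\mathfrak{\widetilde{k}}$, whence $\mathfrak{\widetilde{k}}^\perp=\{0\}$. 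This means $\mathfrak{s}_{\widetilde{T},\mathfrak{\widetilde{t}}}=\mathfrak{s}_{\Phi,\mathfrak{h}}=\mathfrak{g}$ is semisimple, contradicting the standing assumption (recorded in Eq. \eqref{levipos}) that a regular Levi decomposable subalgebra is non-semisimple and satisfies $T^u\neq\emptyset$ or ($T^u=\emptyset$ and $\mathfrak{k}^\perp\neq\{0\}$). Thus the case $\widetilde{T}^u=\emptyset$ with $\mathfrak{\widetilde{k}}^\perp\neq\{0\}$ is incompatible with $\widetilde{T}\cup-\widetilde{T}=\Phi$, so we are necessarily in the situation $\widetilde{T}^u\neq\emptyset$.

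With $\widetilde{T}^u\neq\emptyset$ established, I would invoke Lemma \ref{lemma3}: there is $w=w_0\circ w'\in\mathcal{W}$ with $w(\widetilde{T}^u)\subseteq\Phi^-$, and $\mathfrak{s}_{w(\widetilde{T}),w(\mathfrak{\widetilde{t}})}$ is conjugate under $G$ to $\mathfrak{s}_{\widetilde{T},\mathfrak{\widetilde{t}}}$ by Proposition \ref{wprop}. Setting $\stt=\mathfrak{s}_{w(\widetilde{T}),w(\mathfrak{\widetilde{t}})}$, so that $T^r=w(\widetilde{T}^r)$, $T^u=w(\widetilde{T}^u)$, $\mathfrak{k}=w(\mathfrak{\widetilde{k}})$, and $\mathfrak{k}^\perp=w(\mathfrak{\widetilde{k}}^\perp)$, we have $T^u\subseteq\Phi^-$ and $T^u\neq\emptyset$ (since $w$ is a bijection of $\Phi$), which is exactly the desired conclusion. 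The only mild subtlety—and the one point I would be careful to state rather than gloss over—is the preservation of the $[\mathfrak{g}_\alpha,\mathfrak{g}_{-\alpha}]$-containment condition for $\mathfrak{t}$ under the Weyl action, i.e. that $\mathfrak{s}_{w(\widetilde{T}),w(\mathfrak{\widetilde{t}})}$ is again a bona fide regular Levi decomposable subalgebra; this follows because $G$-conjugation carries regular subalgebras (normalized by $\mathfrak{h}$) to regular subalgebras and carries Levi decompositions to Levi decompositions, so no structural condition is lost. I do not expect any real obstacle here; the lemma is essentially a bookkeeping corollary of Lemma \ref{lemma3} once the degenerate semisimple case is excluded.
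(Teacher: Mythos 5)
Your proposal is correct and follows essentially the same route as the paper: apply Lemma \ref{lemma3} and rule out the alternative $T^u=\emptyset$, $\mathfrak{k}^\perp\neq\{0\}$ by observing that together with $T\cup -T=\Phi$ it would force $T=\Phi$ and hence $\stt=\mathfrak{g}$, a contradiction. The only difference is cosmetic — you exclude the degenerate case before invoking Lemma \ref{lemma3} rather than after, and you spell out why $\mathfrak{k}=\mathfrak{h}$ in that case — which the paper leaves implicit.
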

\begin{proof}
By Lemma \ref{lemma3}, $\mathfrak{s}_{\widetilde{T}, \mathfrak{\widetilde{t}}}$
is conjugate under the adjoint group $G$ of $\mathfrak{g}$ to 
a  subalgebra $\stt= \mathfrak{s}_{T^r,\mathfrak{k}} \inplus \mathfrak{s}_{T^u,\mathfrak{k}^\perp}$, such that 
\begin{equation}
T^u\neq \emptyset~\text{and}~  T^u \subseteq \Phi^-; ~\text{or}~ T^u=\emptyset~\text{and}~ \mathfrak{k}^\perp\neq \{0\}.
\end{equation} 
Since $\widetilde{T}\cup -\widetilde{T} =\Phi$, then $T\cup -T =\Phi$.   Since $T\cup -T =\Phi$, it follows, as we'll show, that we must have $T^u\neq \emptyset$ and  $T^u \subseteq \Phi^-$: This follows since,
if $T^u=\emptyset$,   $\mathfrak{k}^\perp\neq \{0\}$, and  $T\cup -T =\Phi$, then $T\cup -T=T =\Phi$. But, then we'd have $\stt=\mathfrak{g}$, a contradiction.
Thus, it must be the case that  $T^u\neq \emptyset$ and  $T^u \subseteq \Phi^-$.
\end{proof}

 Note that a closed subset $T$ of $\Phi$ is  called {\it parabolic} if $T \cup -T =\Phi$. However, we didn't adopt this terminology.

\begin{remark}\label{remark}
Justified by Lemma \ref{lemma3}, we will assume henceforth that a regular Levi decomposable subalgebra 
$\stt= \mathfrak{s}_{T^r,\mathfrak{k}} \inplus \mathfrak{s}_{T^u,\mathfrak{k}^\perp}$ is such that
\begin{equation}
T^u\neq \emptyset ~\text{and}~T^u \subseteq \Phi^-;~\text{or} ~ T^u=\emptyset~ \text{and}~\mathfrak{k}^\perp\neq \{0\}.
\end{equation}
And, justified by Lemma \ref{lemma22}, we will assume henceforth that for a regular Levi decomposable subalgebra $\stt$ such that $\cww =\Phi$, then
\begin{equation}
T^u\neq \emptyset ~\text{and}~T^u \subseteq \Phi^-.
\end{equation}
\end{remark}

Let $\stt= \mathfrak{s}_{T^r,\mathfrak{k}} \inplus \mathfrak{s}_{T^u,\mathfrak{k}^\perp}$ be a regular Levi decomposable subalgebra of $\mathfrak{g}$. 
Given the  simple $\mathfrak{g}$-module $V(\lambda)$, with fixed highest weight vector $v_\lambda\in V(\lambda)$, 
we identify two $\mathfrak{s}_{T^r,\mathfrak{k}}$-submodules of $V(\lambda)$. The first is 
$\mathfrak{s}_{T^u, \mathfrak{k}^\perp} \cdot V(\lambda)$, and the second is
\begin{equation}
\langle v_\lambda \rangle_{\mathfrak{s}_{T^r,\mathfrak{k}}}\coloneqq \text{Span} \{e_{-\beta_1} \cdots e_{-\beta_m} \cdot v_\lambda~|~ -\beta_1,...,-\beta_m \in T^r \cap \Phi^- \},
\end{equation}
where $\beta_1$,...,$\beta_m$ are not necessarily distinct. Note that we include $v_\lambda \in \langle v_\lambda \rangle_{\mathfrak{s}_{T^r,\mathfrak{t}}}$.
Observe that the $\mathfrak{s}_{T^r,\mathfrak{k}}$-submodule $\langle v_\lambda \rangle_{\mathfrak{s}_{T^r,\mathfrak{k}}}$ is simple
since it is generated by a highest weight vector $v_\lambda$, with respect to the semisimple Lie algebra $\mathfrak{s}_{T^r,\mathfrak{k}}$ 
(see [\cite{humphreys}, Theorem 20.2]).

\begin{lemma}\label{lemma}
Let  $\stt = \mathfrak{s}_{T^r,\mathfrak{k}} \inplus \mathfrak{s}_{T^u,\mathfrak{k}^\perp}$ be a regular Levi decomposable subalgebra of $\mathfrak{g}$, such that
$\cww=\Phi$.
 Further, let $V(\lambda)$ be the simple $\mathfrak{g}$-module of highest weight $\lambda$. Then,  
 we have a decomposition of $\mathfrak{s}_{T^r,\mathfrak{k}}$-modules
\begin{equation}
V(\lambda)|_{\mathfrak{s}_{T^r,\mathfrak{k}} } = (\radfac \cdot V(\lambda)) \oplus \langle v_\lambda \rangle_{\mathfrak{s}_{T^r,\mathfrak{k}}},
~\text{or} ~  \radfac \cdot V(\lambda)=V(\lambda).
\end{equation}
\end{lemma}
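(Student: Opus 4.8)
The plan is to analyze the weight-space structure of $V(\lambda)$ under the action of the Levi factor $\mathfrak{s}_{T^r,\mathfrak{k}}$ and the radical $\mathfrak{s}_{T^u,\mathfrak{k}^\perp}$, exploiting the key hypothesis $[T\cup -T]=\Phi$ together with the normalization from Remark~\ref{remark} that $T^u\subseteq\Phi^-$. First I would observe that $\mathfrak{s}_{T^u,\mathfrak{k}^\perp}\cdot V(\lambda)$ is indeed an $\mathfrak{s}_{T^r,\mathfrak{k}}$-submodule: this is because $T^r$ normalizes $T^u$ inside $\stt$ (by Lemma~\ref{lem:deccl}, $\alpha+\beta\in T^u$ whenever $\alpha\in T^u$, $\beta\in T$, $\alpha+\beta\in\Phi$), so $[\mathfrak{s}_{T^r,\mathfrak{k}},\mathfrak{s}_{T^u,\mathfrak{k}^\perp}]\subseteq\mathfrak{s}_{T^u,\mathfrak{k}^\perp}$, and a standard computation shows that for any module the subspace spanned by $\mathfrak{r}\cdot V$ is stable under any subalgebra normalizing $\mathfrak{r}$. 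Likewise $\langle v_\lambda\rangle_{\mathfrak{s}_{T^r,\mathfrak{k}}}$ is an $\mathfrak{s}_{T^r,\mathfrak{k}}$-submodule, and it is simple as already noted in the excerpt (it is the cyclic module on the highest-weight vector $v_\lambda$).

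Next I would address the dichotomy. The two cases should be separated by whether $v_\lambda\in\mathfrak{s}_{T^u,\mathfrak{k}^\perp}\cdot V(\lambda)$ or not. If $v_\lambda\in\mathfrak{s}_{T^u,\mathfrak{k}^\perp}\cdot V(\lambda)$, I claim $\mathfrak{s}_{T^u,\mathfrak{k}^\perp}\cdot V(\lambda)=V(\lambda)$: since $[T\cup -T]=\Phi$ and $\mathfrak{s}_{T^u,\mathfrak{k}^\perp}\cdot V(\lambda)$ is a $\stt$-submodule (being the image of the action of an ideal, it is stable under all of $\stt$), it is in particular a $\mathfrak{s}_{[T\cup -T],\mathfrak{h}}$-submodule once we throw in the toral part; and by Theorem~\ref{lwideb} the subalgebra $\stt$ is wide, so $V(\lambda)|_{\stt}$ is indecomposable — but more directly, any $\stt$-submodule containing $v_\lambda$ must be all of $V(\lambda)$ because $v_\lambda$ generates $V(\lambda)$ already under the root vectors $e_{-\gamma}$ for $\gamma\in\Phi^+$, and every such $e_{-\gamma}$ lies in $\mathfrak{s}_{[T\cup -T],\mathfrak{h}}=\mathfrak{g}$; I would make this precise by showing that $\langle v_\lambda\rangle_{\stt}$, the $\stt$-cyclic span of $v_\lambda$, equals $V(\lambda)$ when $[T\cup -T]=\Phi$ (using Lemma~\ref{wideclosure} to write every negative root as a sum of roots in $T\cup -T$ and pushing $v_\lambda$ down). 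Hence in this case we land in the second alternative.

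In the complementary case $v_\lambda\notin\mathfrak{s}_{T^u,\mathfrak{k}^\perp}\cdot V(\lambda)$, I would prove the direct sum decomposition. Set $N=\mathfrak{s}_{T^u,\mathfrak{k}^\perp}\cdot V(\lambda)$ and $L=\langle v_\lambda\rangle_{\mathfrak{s}_{T^r,\mathfrak{k}}}$. For $N+L=V(\lambda)$: the subspace $N+L$ is an $\mathfrak{s}_{T^r,\mathfrak{k}}$-submodule containing $v_\lambda$, and I would argue it is actually a $\stt$-submodule (it is $T^r$-stable by construction, $\mathfrak{k}^\perp$-stable since $v_\lambda$ is a weight vector, and $T^u$-stable because $\mathfrak{s}_{T^u,\mathfrak{k}^\perp}\cdot L\subseteq N$ while $N$ is an ideal-image hence $\stt$-stable), so it contains $\langle v_\lambda\rangle_{\stt}=V(\lambda)$ by the argument of the previous paragraph. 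For $N\cap L=\{0\}$: here is where I expect the main obstacle. Since $L$ is a simple $\mathfrak{s}_{T^r,\mathfrak{k}}$-module and $N$ is an $\mathfrak{s}_{T^r,\mathfrak{k}}$-submodule, $N\cap L$ is either $\{0\}$ or all of $L$; if it were all of $L$ then $v_\lambda\in L\subseteq N$, contradicting the case hypothesis. Thus $N\cap L=\{0\}$ and $V(\lambda)|_{\mathfrak{s}_{T^r,\mathfrak{k}}}=N\oplus L$, as desired.

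The delicate point throughout — and the step I would most carefully justify — is the identity $\langle v_\lambda\rangle_{\stt}=V(\lambda)$ when $[T\cup -T]=\Phi$, i.e. that $v_\lambda$ generates all of $V(\lambda)$ under $\stt$. A priori $\stt$ only has root vectors for roots in $T=T^r\cup T^u$, not all of $\Phi$; one must use that, by Lemma~\ref{wideclosure} applied to the symmetric closed set $[T\cup -T]=\Phi$, every weight $\mu\in\Pi(V(\lambda))$ is reached from $\lambda$ by subtracting roots in $T\cup -T$, and then that a weight vector of weight $\mu$ can be obtained by applying a product of root vectors $e_{\pm\beta}$, $\beta\in T$, to $v_\lambda$ — an induction on the height $\mathrm{ht}(\lambda-\mu)$, at each step using that one can lower (or raise) by some root of $T$ or $-T$ and stay in the module, invoking $\mathfrak{sl}_2$-theory for each root $\beta$ to guarantee nonvanishing after the correct normalization. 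This is essentially the combinatorial heart of Theorem~\ref{lwideb}, and I would either cite it directly or replay the short argument here.
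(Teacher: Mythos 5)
There is a genuine gap, and it comes from misreading the hypothesis. The lemma assumes $T\cup -T=\Phi$ (the paper's condition \emph{without} closure brackets), not $[T\cup -T]=\Phi$, and you have used the latter throughout. The distinction is exactly what the $\mathfrak{sl}_4$ example following Theorem \ref{theorem1} is designed to exhibit: there $[T\cup -T]=\Phi$ but $T\cup -T\subsetneq\Phi$, and your central claim $\langle v_\lambda\rangle_{\stt}=V(\lambda)$ fails --- for $V(\lambda_3)$ the $\stt$-cyclic span of $v_{\lambda_3}$ is $\mathrm{Span}\{v_{\lambda_3},\,e_{-\alpha_3}v_{\lambda_3},\,e_{-\alpha_{1,3}}v_{\lambda_3}\}$, which misses $e_{-\alpha_{2,3}}v_{\lambda_3}$, and the conclusion of the lemma is false for that subalgebra. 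So the step you yourself flag as delicate, and propose to justify via Lemma \ref{wideclosure} and ``the combinatorial heart of Theorem \ref{lwideb}'', cannot be justified under the hypothesis as you state it: Theorem \ref{lwideb} yields indecomposability, not cyclic generation by $v_\lambda$ over $\stt$, and rewriting roots of $\Phi$ as sums of elements of $T\cup -T$ does not produce root vectors that $\stt$ does not contain.

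With the hypothesis read correctly the gap closes at once, and your argument becomes shorter than you anticipate: $T\cup -T=\Phi$ together with the normalization $T^u\subseteq\Phi^-$ from Remark \ref{remark} forces $\Phi^-\subseteq T$ (if $-\beta\in\Phi^-$ and $-\beta\notin T$, then $\beta\in T^u\cap\Phi^+$, impossible), so $\stt$ contains every negative root vector and $\langle v_\lambda\rangle_{\stt}\supseteq U(\mathfrak{n}^-)\cdot v_\lambda=V(\lambda)$ with no induction on heights. Once that is in place, your architecture --- dichotomy on whether $v_\lambda\in N\coloneqq\radfac\cdot V(\lambda)$, using that $N$ and $N+\langle v_\lambda\rangle_{\mathfrak{s}_{T^r,\mathfrak{k}}}$ are $\stt$-submodules, and using simplicity of $\langle v_\lambda\rangle_{\mathfrak{s}_{T^r,\mathfrak{k}}}$ to kill the intersection --- is sound and is genuinely different from the paper's proof, which instead establishes $V(\lambda)=N+\langle v_\lambda\rangle_{\mathfrak{s}_{T^r,\mathfrak{k}}}$ directly by commuting each monomial $e_{-\beta_1}\cdots e_{-\beta_l}v_\lambda$ until a $T^u$-root vector stands in front. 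Your submodule-theoretic route avoids that reordering computation; the paper's route avoids having to check that $N+\langle v_\lambda\rangle_{\mathfrak{s}_{T^r,\mathfrak{k}}}$ is $\stt$-stable. But as submitted, the proof rests on a generation claim that is false under its stated hypothesis, so it must be repaired before it stands.
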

\begin{proof}
Let $\stt$ be a regular Levi decomposable subalgebra such that $\cww=\Phi$. Then, we have $T^u\neq \emptyset$ and  $T^u \subseteq \Phi^-$ (see Remark \ref{remark}).
This implies that $\Phi^-\subseteq T$.

We first show that as vector spaces $V(\lambda)= (\radfac \cdot V(\lambda)) + \langle v_\lambda \rangle_{\mathfrak{s}_{T^r,\mathfrak{k}}}$. Consider a monomial weight vector $e_{-\beta_1} \cdots e_{-\beta_l}  v_\lambda \neq 0$ of $V(\lambda)$ with $-\beta_1,...,-\beta_l \in \Phi^- \subseteq T$ not necessarily distinct.
Such elements generate $V(\lambda)$ [\cite{humphreys}, Theorem 20.2]. Thus, it suffices to show $e_{-\beta_1} \cdots e_{-\beta_l}  v_\lambda \in (\radfac \cdot V(\lambda)) + \langle v_\lambda \rangle_{\mathfrak{s}_{T^r,\mathfrak{k}}}$.

If $-\beta_1,...,-\beta_l \in T^r \cap \Phi^-$, then $e_{-\beta_1} \cdots e_{-\beta_l}  v_\lambda \in \langle v_\lambda \rangle_{\mathfrak{s}_{T^r,\mathfrak{k}}}$. 
If $-\beta_1 \in T^u\subseteq \Phi^-$, then $e_{-\beta_1} \cdots e_{-\beta_l}  v_\lambda \in \radfac \cdot V(\lambda)$. 

Otherwise, there is a minimum $m$  with $1\leq m \leq l-1$ such that 
$-\beta_1,...,-\beta_m  \in T^r \cap \Phi^-$, and $-\beta_{m+1} \in T^u \subseteq \Phi^-$.  We will show that $e_{-\beta_1} \cdots e_{-\beta_l} \cdot v_\lambda \in \radfac \cdot V(\lambda)$. 
We have
\begin{equation} \label{summm}
\setstretch{1.35}
\begin{array}{lllll}
e_{-\beta_1} \cdots e_{-\beta_m}  e_{-\beta_{m+1}} \cdots e_{-\beta_l}  v_\lambda =\\
 e_{-\beta_1} \cdots e_{-\beta_{m-1}} e_{-\beta_{m+1}} e_{-\beta_m} \cdots e_{-\beta_l}  v_\lambda ~~ + \\
  e_{-\beta_1} \cdots e_{-\beta_{m-1}} [ e_{-\beta_m}, e_{-\beta_{m+1}} ] \cdots e_{-\beta_l}  v_\lambda. 
\end{array}
\end{equation}
If $[ e_{-\beta_m}, e_{-\beta_{m+1}} ] \neq0$, then $[ e_{-\beta_m}, e_{-\beta_{m+1}} ] \in \mathfrak{g}_{-\gamma}$, where $-\gamma \in T^u \subseteq \Phi^-$, considering Lemma \ref{lem:deccl}.
   If we continue this procedure on  the two 
 terms on the right-hand side of 
  Eq. \eqref{summm}, we end with a sum of terms, each of whose first factor is a negative root vector with root in $T^u \subseteq \Phi^-$. Hence, $e_{-\beta_1} \cdots e_{-\beta_l} v_\lambda$ is an element of
  $\radfac \cdot V(\lambda)$. 
Therefore, we've established the equality of vector spaces $V(\lambda)= (\radfac \cdot V(\lambda)) + \langle v_\lambda \rangle_{\mathfrak{s}_{T^r,\mathfrak{k}}}$.

Now, if $(\radfac \cdot V(\lambda)) \cap \langle v_\lambda \rangle_{\mathfrak{s}_{T^r,\mathfrak{k}}} =\{0\}$, then $V(\lambda)= (\radfac \cdot V(\lambda)) \oplus \langle v_\lambda \rangle_{\mathfrak{s}_{T^r,\mathfrak{k}}}$. If 
$(\radfac \cdot V(\lambda)) \cap \langle v_\lambda \rangle_{\mathfrak{s}_{T^r,\mathfrak{k}}} \neq \{0\}$, then, 
since $\langle v_\lambda \rangle_{\mathfrak{s}_{T^r,\mathfrak{k}}}$ is a simple $\mathfrak{s}_{T^r,\mathfrak{k}}$-module, we have $\langle v_\lambda \rangle_{\mathfrak{s}_{T^r,\mathfrak{k}}}\subseteq  \radfac \cdot V(\lambda)$. In this case, considering $V(\lambda)= (\radfac \cdot V(\lambda)) + \langle v_\lambda \rangle_{\mathfrak{s}_{T^r,\mathfrak{k}}}$, we have
$V(\lambda)=\radfac \cdot V(\lambda)$. 
\end{proof}

We will now present the theorem that establishes conditions for a regular Levi decomposable subalgebra to be cyclic wide.

\begin{theorem}\label{theorem1}
Let $\stt$ be a  regular Levi decomposable subalgebra
 of $\mathfrak{g}$. If $\cww=\Phi$, then $\stt$ is cyclic wide. 
\end{theorem}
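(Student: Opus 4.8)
The plan is to verify, for each dominant weight $\lambda$, the two conditions in the definition of \emph{cyclic indecomposable} for the restricted module $V(\lambda)|_{\stt}$: that it is indecomposable as a $\stt$-module, and that $V(\lambda)/(\mathfrak{r}\cdot V(\lambda))$ is a simple module over the Levi factor $\levfac$, where $\mathfrak{r}=\radfac$. First I would use the reduction recorded in Remark \ref{remark}: being cyclic wide is invariant under conjugation by $G$ (an inner automorphism of $\mathfrak{g}$ carries a simple module to an isomorphic one and intertwines restrictions to $G$-conjugate subalgebras), so by Lemma \ref{lemma22} we may assume at the outset that $T^u\neq\emptyset$ and $T^u\subseteq\Phi^-$.

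The indecomposability is the easy half: $\cww=\Phi$ trivially gives $[\cww]=\Phi$, so Theorem \ref{lwideb} shows $\stt$ is wide, whence $V(\lambda)|_{\stt}$ is indecomposable for every $\lambda$. For the quotient I would invoke Lemma \ref{lemma}, which applies exactly because $\cww=\Phi$, obtaining
\[
V(\lambda)|_{\levfac}=(\radfac\cdot V(\lambda))\oplus\langle v_\lambda\rangle_{\levfac}
\qquad\text{or}\qquad
\radfac\cdot V(\lambda)=V(\lambda).
\]
Since $\mathfrak{r}=\radfac$ is an ideal of $\stt$, the subspace $\mathfrak{r}\cdot V(\lambda)$ is a $\stt$-submodule, so $V(\lambda)/(\mathfrak{r}\cdot V(\lambda))$ is canonically a module for $\stt/\mathfrak{r}\cong\levfac$, and this $\levfac$-structure is the one obtained by restricting $V(\lambda)$ to $\levfac$ and passing to the quotient by the $\levfac$-submodule $\radfac\cdot V(\lambda)$. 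In the first branch, $\langle v_\lambda\rangle_{\levfac}$ is a $\levfac$-stable complement of $\mathfrak{r}\cdot V(\lambda)$, so $V(\lambda)/(\mathfrak{r}\cdot V(\lambda))\cong\langle v_\lambda\rangle_{\levfac}$ as $\levfac$-modules, and this is simple because it is generated by the highest weight vector $v_\lambda$ relative to the semisimple algebra $\levfac$ (as noted before the statement of Lemma \ref{lemma}). Hence $V(\lambda)|_{\stt}$ is cyclic indecomposable in this branch. In the remaining branch, $V(\lambda)/(\mathfrak{r}\cdot V(\lambda))=\{0\}$ — precisely the case $\lambda|_{\mathfrak{k}^\perp}\neq 0$, forcing $v_\lambda\in\mathfrak{r}\cdot V(\lambda)$ — and I would dispose of this degenerate case directly, noting that it causes no difficulty with the definition.

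I do not expect a deep obstacle here: the two substantive inputs are already available — the wideness criterion of Theorem \ref{lwideb} and the splitting of $V(\lambda)$ over $\levfac$ along the radical provided by Lemma \ref{lemma} (whose proof carries the real combinatorial work, using $T^u\subseteq\Phi^-$ and closedness to keep each commutator $[e_{-\beta_m},e_{-\beta_{m+1}}]$ inside $\radfac$). The points needing care in the assembly are (i) confirming that the $\levfac$-module structure on $V(\lambda)/(\mathfrak{r}\cdot V(\lambda))$ inherited from $\stt/\mathfrak{r}$ agrees with the one in Lemma \ref{lemma}, so the identification with $\langle v_\lambda\rangle_{\levfac}$ is legitimate; and (ii) the second branch of that dichotomy, the only place where the quotient is not literally a simple module. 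So the argument is essentially an assembly of Theorem \ref{lwideb} and Lemma \ref{lemma} together with this bookkeeping.
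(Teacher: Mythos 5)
Your proposal is correct and follows essentially the same route as the paper: reduce via Remark \ref{remark} to $T^u\subseteq\Phi^-$, get indecomposability from Theorem \ref{lwideb}, and get simplicity of the quotient from the dichotomy in Lemma \ref{lemma}. Your handling of the degenerate branch is in fact slightly more careful than the paper's, which writes the quotient as $V(0)$ where it is literally the zero module.
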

\begin{proof} Let $\stt= \mathfrak{s}_{T^r,\mathfrak{k}} \inplus \mathfrak{s}_{T^u,\mathfrak{k}^\perp}$ be a regular Levi decomposable subalgebra
such that $\cww=\Phi$. Then,  we have $T^u \neq \emptyset$ and $T^u \subseteq \Phi^-$ (see Remark \ref{remark}).
Further, since $\cww =\Phi$, then $\cw =\Phi$, so that $\stt$ is wide by Theorem \ref{lwideb}.

 Now,  let $V(\lambda)$ be the simple $\mathfrak{g}$-module of highest weight $\lambda$. Since $\stt$ is wide, $V(\lambda)$ is indecomposable when restricted to $\stt$. We must show that 
 $ V(\lambda) / ( \mathfrak{s}_{T^u,\mathfrak{k}^\perp}\cdot V(\lambda)) $ is a simple $\mathfrak{s}_{T^r, \mathfrak{k}}$-module.
 
 By Lemma \ref{lemma}, we have an isomorphism of $\mathfrak{s}_{T^r, \mathfrak{k}}$-modules: 
 \begin{equation}
 V(\lambda) / ( \mathfrak{s}_{T^u,\mathfrak{k}^\perp}\cdot V(\lambda)) \cong \langle v_\lambda \rangle_{\mathfrak{s}_{T^r,\mathfrak{k}}}, ~\text{or}~ V(\lambda) / ( \mathfrak{s}_{T^u,\mathfrak{k}^\perp}\cdot V(\lambda)) \cong V(0).
 \end{equation}
 Since both  $\langle v_\lambda \rangle_{\mathfrak{s}_{T^r,\mathfrak{k}}}$ and   $V(0)$ are simple $\mathfrak{s}_{T^r, \mathfrak{k}}$-modules, then  
  we must have that $ V(\lambda) / ( \mathfrak{s}_{T^u,\mathfrak{k}^\perp}\cdot V(\lambda)) $ is a simple $\mathfrak{s}_{T^r, \mathfrak{k}}$-module, as required.
\end{proof}

The following example illustrates that a regular Levi decomposable subalgebra $\stt$ that is wide  is not necessarily cyclic wide.
Hence, we could not weaken the hypothesis of Theorem \ref{theorem1} from $\cww=\Phi$ to $\cw=\Phi$.

\begin{example}
The special linear algebra $\mathfrak{sl}_4$ has simple roots $\Delta=\{ \alpha_1, \alpha_2, \alpha_3\}$ and positive roots
\begin{equation}
\Phi^+= \{ \alpha_{p, q} \coloneqq \alpha_p+\cdots + \alpha_q~|~ 1 \leq p \leq q \leq 3\}.
\end{equation}
Note that $\alpha_i =\alpha_{i,i}$.  Define a closed subset $T$ of $\Phi$ by specifying its symmetric and special parts, respectively:
\begin{equation}
\begin{array}{llll}
 T^r\coloneqq \{ \pm \alpha_3 \}, ~
   T^u\coloneqq \{ -\alpha_1, -\alpha_{1,2}, -\alpha_{1,3}\}.
   \end{array}
\end{equation}
Let $\mathfrak{t}$ be generated by $[e_{\alpha_3}, e_{-\alpha_3}]$.
Then, $\cw=\Phi$, so that  $\stt$ is wide (Theorem \ref{lwideb}).  However,  $\cww \subsetneq \Phi$.
Observe that the Levi factor $\mathfrak{s}_{T^r, \mathfrak{t}}$ is isomorphic to $\mathfrak{sl}_2$. And,
 the radical  $\mathfrak{s}_{T^u,0}$ is abelian.

Consider the simple $\mathfrak{sl}_4$-module $V(\lambda_3)$. Since  $\stt$ is wide,
$V(\lambda_3)$ is $\stt$-indecomposable. However, as we'll see, $V(\lambda_3)/ \mathfrak{s}_{T^u,0} \cdot V(\lambda_3)$ is not 
$\mathfrak{s}_{T^r,\mathfrak{t}}$-simple.

A basis for $V(\lambda_3)$ is given by $\{ v_{\lambda_3}, e_{-\alpha_3} v_{\lambda_3}, e_{-\alpha_{2,3}} v_{\lambda_3}, e_{-\alpha_{1,3}} v_{\lambda_3} \}$.
A decomposition of $V(\lambda_3)$ with respect to $\mathfrak{s}_{T^r, \mathfrak{t}}$ into $\mathfrak{s}_{T^r, \mathfrak{t}}$-submodules  is as follows:
\begin{equation}\label{decom1}
\begin{array}{llllllllll}
V(\lambda_3)|_{\mathfrak{s}_{T^r, \mathfrak{t}}} &=& \text{Span}\{  v_{\lambda_3}, e_{-\alpha_3} v_{\lambda_3} \}    \oplus    \text{Span}\{ e_{-\alpha_{2,3}} v_{\lambda_3} \}\oplus \\
&&  \text{Span}\{  e_{-\alpha_{1,3}} v_{\lambda_3} \}.
\end{array}
\end{equation}
And, 
\begin{equation}\label{decom2}
\mathfrak{s}_{T^u,0} \cdot V(\lambda_3) = \text{Span}\{ e_{-\alpha_{1,3}} v_{\lambda_3} \}.
\end{equation}
Eqs. \eqref{decom1} and \eqref{decom2} imply that  $V(\lambda_3)/ \mathfrak{s}_{T^u,0} \cdot V(\lambda_3)$ 
is isomorphic to $\text{Span}\{  v_{\lambda_3}, e_{-\alpha_3} v_{\lambda_3} \}    \oplus      \text{Span}\{  e_{-\alpha_{2,3}} v_{\lambda_3} \}$, as $\mathfrak{s}_{T^r, \mathfrak{t}}$-modules. 
Therefore, the $\mathfrak{s}_{T^r, \mathfrak{t}}$-module 
$V(\lambda_3)/ \mathfrak{s}_{T^u,0} \cdot V(\lambda_3)$  is not
simple.  This implies that, although $\stt$ is wide, it is not cyclic wide.

We make a final observation in this example. If we were to let $\mathfrak{t}=\text{Span} \{h_{\alpha_3}, 2h_{\alpha_2}+h_{\alpha_3} \}$, then  
$\stt=\mathfrak{s}_{T^r,\mathfrak{k}} \inplus \mathfrak{s}_{T^u,\mathfrak{k}^\perp}$, where
$\mathfrak{k}= \text{Span}\{h_{\alpha_3}\}$ and  $\mathfrak{k}^\perp= \text{Span}\{ 2h_{\alpha_2}+h_{\alpha_3}\}$. 
We would then have $\mathfrak{s}_{T^u,\mathfrak{k}^\perp} \cdot V(\lambda_3) =V(\lambda_3)$. 
Hence,  the $\mathfrak{s}_{T^r, \mathfrak{k}}$-module 
$V(\lambda_3)/ \mathfrak{s}_{T^u,\mathfrak{k}^\perp} \cdot V(\lambda_3) \cong V(0)$  would be
simple, and  thus $V(\lambda_3)$ would be a cyclic  indecomposable $\stt$-module. Note also that $\stt$ is not perfect.
\end{example}

In the next theorem, we prove that the condition $\cww=\Phi$ is 
 necessary and sufficient to determine whether or not   regular Levi decomposable subalgebras, 
whose radicals are ad-nilpotent subalgebras, are   cyclic wide.

\begin{theorem}\label{theoremtwo}
Let $\stt$ be a regular Levi decomposable subalgebra whose radical is an ad-nilpotent subalgebra. Then,  $\stt$ is cyclic wide if and only if $\cww=\Phi$.
\end{theorem}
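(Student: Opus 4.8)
The plan is to prove the two directions separately, using the structural reductions already available. The ``if'' direction is immediate: if $\cww = \Phi$, then Theorem \ref{theorem1} shows $\stt$ is cyclic wide, with no use of the ad-nilpotency hypothesis. So the entire content is the ``only if'' direction, and here the ad-nilpotency hypothesis is exactly the statement that $\mathfrak{k}^\perp = \{0\}$, i.e. $\stt = \mathfrak{s}_{T^r,\mathfrak{t}} \inplus \mathfrak{s}_{T^u,0}$.

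For the contrapositive, I would assume $\cww \subsetneq \Phi$ and produce a simple $\mathfrak{g}$-module $V(\lambda)$ whose restriction to $\stt$ is not cyclic indecomposable. First I would invoke Lemma \ref{lemma3} to conjugate so that $T^u \subseteq \Phi^-$ (note $T^u \neq \emptyset$, since if $T^u = \emptyset$ and $\mathfrak{k}^\perp = \{0\}$ the subalgebra fails Eq. \eqref{levipos}); conjugation by $G$ preserves the cyclic-wide property and the ad-nilpotency of the radical, and preserves whether $\cww = \Phi$. Next, since $S := [T^r \cup -T^r] \subseteq \cww \subsetneq \Phi$ is a proper symmetric closed subset, and since $[\cww] \supseteq [T^u \cup -T^u]$ while $T^u \subseteq \Phi^-$ gives $-T^u \subseteq \Phi^+$, I would argue that there is a simple root $\alpha_i$ (after possibly a further Weyl conjugation adapted to a base in which $T^u$ consists of negative roots and $S$ is generated by a subset of simple roots — here I may want to replace the blanket reduction with a careful choice of base, as in the proof of Lemma \ref{lemma3}) such that $\pm\alpha_i \notin T$ and $-\alpha_i \notin T^u$: equivalently, a simple root direction ``missed'' by $T$ entirely. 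The candidate module is then $V(\lambda_i)$, the fundamental representation for this $\alpha_i$, mirroring the $V(\lambda_3)$ in the worked example.

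The heart of the argument is to show, for this choice, that $\mathfrak{s}_{T^u,0}\cdot V(\lambda_i) \subsetneq V(\lambda_i)$ but $V(\lambda_i)/(\mathfrak{s}_{T^u,0}\cdot V(\lambda_i))$ is not $\mathfrak{s}_{T^r,\mathfrak{t}}$-simple. For the first part: because $T^u \subseteq \Phi^-$, every element of $\mathfrak{s}_{T^u,0}\cdot V(\lambda_i)$ lies in the sum of weight spaces $V(\lambda_i)_\mu$ with $\mu = \lambda_i - \sum$ (positive roots) where at least one summand lies in $-T^u$; choosing $\lambda_i$ so that $\langle \lambda_i, \alpha_j\rangle = 0$ for all $j \neq i$ means $\lambda_i$ itself is not of this form (it is the highest weight), so $v_{\lambda_i} \notin \mathfrak{s}_{T^u,0}\cdot V(\lambda_i)$, giving properness. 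For the second part, I would use Lemma \ref{lemma} in spirit but not verbatim (its hypothesis is $\cww = \Phi$, which now fails), instead arguing directly: the submodule $\langle v_{\lambda_i}\rangle_{\mathfrak{s}_{T^r,\mathfrak{k}}}$ is simple, it maps into the quotient, but there exists a weight vector $e_{-\beta} v_{\lambda_i}$ with $\beta \in \Phi^+$, $-\beta \notin T$, $-\beta \notin -T^u$ — the existence of such a $\beta$ is precisely what $\cww \subsetneq \Phi$ buys us near $\alpha_i$ — whose image in the quotient is nonzero and not contained in the image of $\langle v_{\lambda_i}\rangle_{\mathfrak{s}_{T^r,\mathfrak{k}}}$, because acting by $\mathfrak{s}_{T^r,\mathfrak{k}}$ (root vectors for roots in $T^r$) cannot reach the $\alpha_i$-direction. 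Thus the quotient has at least two composition factors.

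The main obstacle I anticipate is the bookkeeping in the second part: pinning down a clean, basis-independent reason that the image of $e_{-\beta} v_{\lambda_i}$ generates a proper $\mathfrak{s}_{T^r,\mathfrak{k}}$-submodule of the quotient, rather than lying in the image of the highest-weight submodule. The cleanest route is probably a weight-combinatorial one: track the coefficient of $\alpha_i$ in weights. Weights of $\langle v_{\lambda_i}\rangle_{\mathfrak{s}_{T^r,\mathfrak{k}}}$ differ from $\lambda_i$ only by roots in $T^r \subseteq \cww$, which (for the right choice of $\alpha_i$, exploiting that $\cww$ is a proper \emph{closed} symmetric subset and hence, by the structure theory, misses an entire simple-root direction in some base) have zero $\alpha_i$-coefficient; weights of $\mathfrak{s}_{T^u,0}\cdot V(\lambda_i)$ involve roots from $-T^u \subseteq \Phi^+$; and $\lambda_i - \beta$ has strictly negative $\alpha_i$-coefficient while avoiding $-T^u$. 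Separating these three weight supports by the $\alpha_i$-coefficient (or by a suitable linear functional on the root lattice vanishing on $T^r$ and on $T^u$ but not at $\beta$) would finish the argument. Verifying that such a functional exists whenever $\cww \subsetneq \Phi$ — essentially a statement that a proper parabolic-type closed subset is contained in a proper ``half-space-with-wall'' configuration — is the one place where I would need to be careful and may need to lean on Lemma \ref{lem:1b} together with the classification of symmetric closed subsets as subsystems cut out by subsets of simple roots in a suitable base.
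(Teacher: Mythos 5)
Your overall strategy matches the paper's: handle the ``if'' direction via Theorem \ref{theorem1}, reduce to $T^u\subseteq\Phi^-$ with $\mathfrak{k}^\perp=\{0\}$, locate a simple root $\alpha_l$ with $\pm\alpha_l\notin T$, and use the fundamental module $V(\lambda_l)$ to contradict cyclic indecomposability. But the step you yourself flag as the ``main obstacle'' is a genuine gap, and the patch you propose does not work. You want a linear functional (an ``$\alpha_i$-coefficient count'') separating the weights of $\langle v_{\lambda_i}\rangle_{\mathfrak{s}_{T^r,\mathfrak{k}}}$, of $\mathfrak{s}_{T^u,0}\cdot V(\lambda_i)$, and of $e_{-\beta}v_{\lambda_i}$, and you justify its existence by asserting that a proper symmetric closed subset ``misses an entire simple-root direction in some base,'' i.e.\ is cut out by a proper subset of simple roots after a suitable Weyl conjugation. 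That structural claim is false: the long roots of $G_2$ form a proper symmetric closed subset of type $A_2$ such that, in every base, each simple root occurs with nonzero coefficient in some element of the subset (the long roots of $B_2$ give a similar $A_1\times A_1$ example). Moreover, a further base change at that stage risks destroying the normalization $T^u\subseteq\Phi^-$ that you have already used. So, as written, the second half of your argument does not close. (The existence of the missed simple root itself needs none of this machinery: if $-\Delta\subseteq T$ then $\Phi^-\subseteq T$ by closedness, forcing $T\cup -T=\Phi$; and then $\alpha_l\notin T$ because $\alpha_l\in T$ would put $\alpha_l\in T^u\subseteq\Phi^-$.)

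The fix is cheaper than what you attempt, and is what the paper does: take $\beta=\alpha_l$, the missed simple root itself. Then $e_{-\alpha_l}v_{\lambda_l}\neq 0$ is already a highest weight vector for $\mathfrak{s}_{T^r,\mathfrak{k}}$, since for $\beta\in T^r\cap\Phi^+$ one has $\beta\neq\alpha_l$ and $\beta-\alpha_l$ is never a negative root. Its weight $\lambda_l-\alpha_l$ cannot occur in $\mathfrak{s}_{T^u,0}\cdot V(\lambda_l)$, because writing $\alpha_l=\gamma+\mu_1+\cdots+\mu_p$ with $-\gamma\in T^u$ and $\mu_i\in\Phi^+$ forces $p=0$ and $\gamma=\alpha_l$, contradicting $-\alpha_l\notin T$. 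Together with $v_{\lambda_l}$, whose weight also does not occur there, this produces two linearly independent maximal vectors of distinct weights in $V(\lambda_l)/\mathfrak{s}_{T^u,0}\cdot V(\lambda_l)$, which by complete reducibility cannot happen in a simple $\mathfrak{s}_{T^r,\mathfrak{k}}$-module. No separating functional, no containment argument about $\langle v_{\lambda_l}\rangle_{\mathfrak{s}_{T^r,\mathfrak{k}}}$, and no auxiliary Weyl conjugation are needed.
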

\begin{proof}
$(\Longleftarrow)$ Since $\stt$ is a regular Levi decomposable subalgebra such that $\cww=\Phi$, then $\stt$ is cyclic wide by Theorem \ref{theorem1}. 

\vspace{2mm}

\noindent $(\Longrightarrow)$ Assume that $\stt$ is a cyclic wide, regular Levi decomposable subalgebra, whose radical is an ad-nilpotent subalgebra.  
Recall that regular Levi decomposable subalgebras $\stt=\mathfrak{s}_{T^r,\mathfrak{k}} \inplus \mathfrak{s}_{T^u,\mathfrak{k}^\perp}$ of $\mathfrak{g}$, whose radicals $\mathfrak{s}_{T^u,\mathfrak{k}^\perp}$ are  ad-nilpotent subalgebras of $\mathfrak{g}$, are precisely those with $\mathfrak{k}^\perp=\{0\}$. Hence, considering Remark \ref{remark},
we have $T^u \subseteq \Phi^-$.

Since $\stt$ is wide, then $\cw =\Phi$ by Theorem \ref{lwideb}. 
By way of contradiction, suppose that  $T\cup -T \subsetneq \Phi$. 

Then, for $\Delta=\{\alpha_1,...,\alpha_n \}$, there must exist $-\alpha_l\in -\Delta \setminus T$, as we'll now show. Suppose it's
not the case that there exists $-\alpha_l\in -\Delta \setminus T$. Then, $-\Delta \subseteq T$. It follows that $\Phi^-\subseteq T$, since $T$ is closed (see
[\cite{humphreys},Corollary $10.2$] applied to negative roots).  This, however, implies
that $T\cup -T= \Phi$, a contradiction. Thus, it must be the case that there exists $-\alpha_l\in -\Delta \setminus T$.
Observe also that
$\alpha_l \notin T$. 

Consider the simple $\mathfrak{g}$-module $V(\lambda_l)$.  We will  identify two  highest weight 
vectors of $V(\lambda_l)$ with respect to the semisimple regular subalgebra $\mathfrak{s}_{T^r,\mathfrak{k}}$, which are of different weights, and neither of these weights occur among weights of
$\mathfrak{s}_{T^u,0}\cdot V(\lambda_l)$.  In particular, this implies that these highest weight vectors are not
contained in $\mathfrak{s}_{T^u,0}\cdot V(\lambda_l)$. 

We have $e_{-\alpha_l} v_{\lambda_l} \neq 0$. Further,
$e_\beta e_{-\alpha_l} v_{\lambda_l} =0$, for all $\beta \in T^r\cap \Phi^+$, as we'll now see: 
We have $e_\beta e_{-\alpha_l} v_{\lambda_l}=[e_\beta, e_{-\alpha_l}] v_{\lambda_l}$, since $\beta \in \Phi^+$. However, 
 $\beta-\alpha_l$ cannot be
a negative root since $\beta\in \Phi^+$ and $-\alpha_l\in -\Delta$; and $\beta\neq \alpha_l$. Hence, $e_\beta e_{-\alpha_l} v_{\lambda_l} =0$, for all $\beta \in T^r\cap \Phi^+$.
Thus, $ e_{-\alpha_l} v_{\lambda_l}$ is a highest weight vector with respect to $\mathfrak{s}_{T^r,\mathfrak{k}}$ of weight $\lambda_l-\alpha_l$. 

Next we'll show that the weight of 
 $e_{-\alpha_l} v_{\lambda_l}$ does not occur among weights of $\mathfrak{s}_{T^u,0}\cdot V(\lambda_l)$. 
 If this were not the case,
 then
 $-\alpha_l =-\gamma-\mu_1-\cdots-\mu_p$ for some $-\gamma \in T^u \subseteq \Phi^-$ and $-\mu_i\in \Phi^-$, for all $i$ (see [\cite{humphreys}, Theorem $20.2$]).
 This implies  $\mu_1+\cdots+\mu_p-\alpha_l =-\gamma$. But this isn't possible since $-\alpha_l\neq -\gamma \in T^u$, 
 and since we can't sum positive roots $\mu_1,...,\mu_p$ and subtract a single simple root $\alpha_l$  to get a negative root different from $-\alpha_l$. Therefore, the weight of $e_{-\alpha_l} v_{\lambda_l}$ doesn't occur among weights of   $\mathfrak{s}_{T^u,0}\cdot V(\lambda_l)$.   In addition, it is a highest
weight vector with respect to  $\mathfrak{s}_{T^r,\mathfrak{k}}$.
 
Clearly $v_{\lambda_l}$, of weight $\lambda_l$,  is a highest weight vector with respect to $\mathfrak{s}_{T^r,\mathfrak{k}}$.  
And, all 
 weights of $\mathfrak{s}_{T^u,0}\cdot V(\lambda_l)$
 are strictly less than $\lambda_l$, hence the weight of $v_{\lambda_l}$ is not among those of $\mathfrak{s}_{T^u,0}\cdot V(\lambda_l)$.

To summarize what we've established so far:
\begin{itemize}
\item $e_{-\alpha_l} v_{\lambda_l} \notin  \mathfrak{s}_{T^u,0}\cdot V(\lambda_l)$ is a highest weight vector with respect to $\mathfrak{s}_{T^r,\mathfrak{k}}$; and
\item  $v_{\lambda_l} \notin \mathfrak{s}_{T^u,0}\cdot V(\lambda_l)$ is 
a highest weight vector with respect to $\mathfrak{s}_{T^r,\mathfrak{k}}$.
\item Moreover, $e_{-\alpha_l} v_{\lambda_l}$  and $v_{\lambda_l}$ are of different weights, and neither of these weights  are among the weights of  $\mathfrak{s}_{T^u,0}\cdot V(\lambda_l)$.
\end{itemize}
Let $[v_{\lambda_l}]$ and $[e_{-\alpha_l} v_{\lambda_l}]$ be the elements of $V(\lambda_l)/  \mathfrak{s}_{T^u,0}\cdot V(\lambda_l)$ with 
representative elements $v_{\lambda_l}$ and $e_{-\alpha_l} v_{\lambda_l}$, respectively.
Hence, we've established that $[v_{\lambda_l}]$ and $[e_{-\alpha_l} v_{\lambda_l}]$ are linearly independent, highest weight vectors, with respect to the semisimple regular subalgebra $\mathfrak{s}_{T^r,\mathfrak{k}}$ in
$V(\lambda_l)/  \mathfrak{s}_{T^u,0}\cdot V(\lambda_l)$.
Thus, $V(\lambda_l)/  \mathfrak{s}_{T^u,0}\cdot V(\lambda_l)$ has a proper decomposition  as 
an $\mathfrak{s}_{T^r,\mathfrak{k}}$-module. 

 This, however, 
 implies that $V(\lambda_l)/  \mathfrak{s}_{T^u,0}\cdot V(\lambda_l)$ isn't a simple $\mathfrak{s}_{T^r,\mathfrak{k}}$-module, a contradiction to the initial assumption that $\stt$ is cyclic wide.
 Therefore, it must be the case that $\cww=\Phi$.
\end{proof}

As we'll see in the next straightforward proposition, a perfect  regular Levi decomposable subalgebra 
has a radical that is an ad-nilpotent subalgebra. Hence, Theorem \ref{theoremtwo} applies to  perfect  regular Levi decomposable subalgebras.

\begin{proposition}\label{lemma2}
Let $\stt=\mathfrak{s}_{T^r,\mathfrak{k}} \inplus \mathfrak{s}_{T^u,\mathfrak{k}^\perp}$ be a perfect regular Levi decomposable subalgebra of $\mathfrak{g}$. Then,
\begin{equation}
\mathfrak{k}^\perp=\{0\}, ~T^u\neq \emptyset, ~\text{and}
\end{equation}
$\mathfrak{t}$  is the subalgebra of $\mathfrak{h}$ generated by $[\mathfrak{g}_\alpha, \mathfrak{g}_{-\alpha} ]$ for each $\alpha \in T^r$.
In particular, its radical is an ad-nilpotent subalgebra.
\end{proposition}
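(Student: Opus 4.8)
The plan is to start from the perfectness hypothesis $\stt = [\stt,\stt]$ and extract information about $\mathfrak{t}$ by looking at which elements of $\mathfrak{h}$ can appear in brackets of elements of $\stt$. Write $\stt = \mathfrak{t} \oplus \bigoplus_{\alpha \in T} \mathfrak{g}_\alpha$ with $T = T^r \sqcup T^u$. The only way to land in the Cartan part $\mathfrak{h}$ when bracketing elements of $\stt$ is via $[\mathfrak{g}_\alpha, \mathfrak{g}_{-\alpha}]$ for $\alpha \in T^r$ (brackets $[\mathfrak{g}_\alpha,\mathfrak{g}_\beta]$ with $\alpha+\beta=0$ require $-\alpha \in T$, i.e.\ $\alpha \in T^r$; brackets with $\mathfrak{t}$ stay outside $\mathfrak{h}$ or are zero since $\mathfrak{t}\subseteq\mathfrak{h}$ is abelian). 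Hence the $\mathfrak{h}$-component of $[\stt,\stt]$ is exactly $\mathfrak{k}$, the subalgebra generated by $[\mathfrak{g}_\alpha,\mathfrak{g}_{-\alpha}]$, $\alpha \in T^r$. Since $\stt = [\stt,\stt]$ forces the $\mathfrak{h}$-component of $\stt$, namely $\mathfrak{t}$, to equal the $\mathfrak{h}$-component of $[\stt,\stt]$, we get $\mathfrak{t} = \mathfrak{k}$, and therefore $\mathfrak{k}^\perp = \{0\}$. This also immediately gives the last sentence: by the remark in the excerpt, regular Levi decomposable subalgebras with $\mathfrak{k}^\perp = \{0\}$ are precisely those whose radical is ad-nilpotent, so the radical $\mathfrak{s}_{T^u,0}$ is ad-nilpotent.

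Next I would show $T^u \neq \emptyset$. Suppose $T^u = \emptyset$. Then $T = T^r$ is symmetric, and $\stt = \mathfrak{s}_{T^r,\mathfrak{k}}$ with $\mathfrak{k}$ generated by the $[\mathfrak{g}_\alpha,\mathfrak{g}_{-\alpha}]$; by the discussion preceding Eq.~\eqref{levipos} this is a regular \emph{semisimple} subalgebra. But a regular Levi decomposable subalgebra is by definition non-semisimple, a contradiction. Alternatively, and perhaps more in the spirit of Eq.~\eqref{levipos}: the defining condition \eqref{levipos} for $\stt$ to be genuinely Levi decomposable is that $T^u \neq \emptyset$, or else $T^u = \emptyset$ and $\mathfrak{k}^\perp \neq \{0\}$; since we have just shown $\mathfrak{k}^\perp = \{0\}$, the second alternative is impossible, so $T^u \neq \emptyset$.

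Finally, the statement ``$\mathfrak{t}$ is the subalgebra of $\mathfrak{h}$ generated by $[\mathfrak{g}_\alpha,\mathfrak{g}_{-\alpha}]$ for each $\alpha \in T^r$'' is exactly the assertion $\mathfrak{t} = \mathfrak{k}$ already obtained, recalling that $\mathfrak{t} = \mathfrak{k} \oplus \mathfrak{k}^\perp$ and $\mathfrak{k}^\perp = \{0\}$. So the three displayed conclusions and the ad-nilpotency of the radical all follow from the single computation that the Cartan-part of $[\stt,\stt]$ is $\mathfrak{k}$. The one step that needs a little care — the ``main obstacle,'' though it is mild — is justifying cleanly that no bracket of root vectors in $\stt$ can contribute to $\mathfrak{h}$ beyond the $[\mathfrak{g}_\alpha,\mathfrak{g}_{-\alpha}]$ with $\alpha\in T^r$, and that brackets involving $\mathfrak{t}$ contribute nothing new to the $\mathfrak{h}$-component; this is just the weight-space bookkeeping $[\mathfrak{g}_\alpha,\mathfrak{g}_\beta]\subseteq\mathfrak{g}_{\alpha+\beta}$ together with $[\mathfrak{t},\mathfrak{h}]=0$, but it should be spelled out so that the identification of the $\mathfrak{h}$-component of $[\stt,\stt]$ with $\mathfrak{k}$ is unambiguous.
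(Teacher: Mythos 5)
Your proof is correct and follows the same route as the paper's: the paper simply asserts that perfectness "immediately" gives $\mathfrak{k}^\perp=\{0\}$ and hence $\mathfrak{t}=\mathfrak{k}$, and then invokes Remark \ref{remark} (i.e.\ Eq.~\eqref{levipos} with $\mathfrak{k}^\perp=\{0\}$) to conclude $T^u\neq\emptyset$, exactly as you do. The only difference is that you spell out the weight-space bookkeeping showing the $\mathfrak{h}$-component of $[\stt,\stt]$ is $\mathfrak{k}$, which the paper leaves implicit.
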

\begin{proof}
Since $\stt$ is  perfect (i.e., $\stt=[\stt, \stt]$), it follows immediately that $\mathfrak{k}^\perp=\{0\}$. This necessitates that $\mathfrak{t}$  is the subalgebra of $\mathfrak{h}$ generated by $[\mathfrak{g}_\alpha, \mathfrak{g}_{-\alpha} ]$ for each $\alpha \in T^r$. Further, 
 since $\stt$ is a regular Levi decomposable subalgebra, then 
 $T^u \neq \emptyset$ (see Remark \ref{remark}). 
\end{proof}

The following corollary establishes that each Levi decomposable subalgebra in \cite{dr15} is cyclic wide. As mentioned above, this result was
established in \cite{casati} for the special case of subalgebras of $\mathfrak{sl}_{n+1}$ isomorphic to $\mathfrak{sl}_n\inplus \mathbb{C}^n$, albeit by more technical means, relying on  
the Feigin-Fourier-Littelmann basis for $\mathfrak{sl}_{n+1}$-modules \cite{feigin}.

\begin{corollary}\label{widecyclicb}
Any subalgebra isomorphic to a subalgebra listed below is cyclic wide in the corresponding ambient simple Lie algebra:
\begin{equation}\label{ghwer}
\begin{array}{llllllllllll}
\mathfrak{sl}_{n} &\inplus& \mathbb{C}^n&\subset& \mathfrak{sl}_{n+1}, \\
\mathfrak{so}_{2n+1} &\inplus& \mathbb{C}^{2n+1} &\subset& \mathfrak{so}_{2n+3},~\text{and} \\
\mathfrak{so}_{2n} &\inplus& \mathbb{C}^{2n}  &\subset& \mathfrak{so}_{2n+2}.
\end{array}
\end{equation}
 (Note that  there are no subalgebras isomorphic to $\mathfrak{sp}_{2n}\inplus \mathbb{C}^{2n}$ in $\mathfrak{sp}_{2n+2}$ \cite{dr15}).
\end{corollary}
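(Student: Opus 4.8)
The plan is to reduce everything to Theorem \ref{theorem1} by verifying, for each family in Eq. \eqref{ghwer}, that the corresponding subalgebra is a regular Levi decomposable subalgebra $\stt$ with $\cww = \Phi$. First I would invoke \cite{dr15}, where each such subalgebra was classified and shown to be regular, hence (after conjugation by $G$, which does not affect whether restrictions of simple modules are cyclic indecomposable) may be taken in the form $\stt = \mathfrak{s}_{T^r,\mathfrak{k}} \inplus \mathfrak{s}_{T^u,\mathfrak{k}^\perp}$ for an explicit closed subset $T \subseteq \Phi$, where $\Phi$ is of type $A_n$, $B_{n+1}$, or $D_{n+1}$ respectively. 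In each case the Levi factor $\mathfrak{s}_{T^r,\mathfrak{k}}$ is semisimple of the stated type ($\mathfrak{sl}_n$, $\mathfrak{so}_{2n+1}$, $\mathfrak{so}_{2n}$), so $T^r$ is the full root subsystem spanned by the appropriate sub-base of $\Delta$, and the radical is abelian of the stated dimension, realized by root spaces $\mathfrak{g}_\gamma$ with $\gamma$ ranging over a single $\mathcal{W}(T^r)$-orbit (the roots involving the ``extra'' node with coefficient $1$).

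The key step is then the purely combinatorial claim that $[T \cup -T] = \Phi$ for each of these configurations. I would argue this directly from the explicit root-system descriptions: writing $\Phi$ in the standard $\varepsilon_i$-coordinates, $T^r$ is the root subsystem missing exactly one simple root $\alpha_k$, and $T^u$ consists of all roots of $\Phi^{+}$ (or $\Phi^{-}$, after applying Lemma \ref{lemma22}) whose coefficient of $\alpha_k$ equals $1$. Since $T$ already contains all of $\Phi^{+}$ except possibly the roots with $\alpha_k$-coefficient $\geq 2$ — and for the branch/long-root structures of $B_{n+1}$ and $D_{n+1}$ one checks that every such higher-coefficient root is a sum of a $T^u$-root and $T^r$-roots, hence lies in $[T\cup -T]$ by Lemma \ref{wideclosure} — together with $-T \supseteq -\Phi^{+}$-portion, we get that $[T \cup -T]$ contains a base of $\Phi$ and all its $\mathcal{W}$-translates, forcing $[T\cup -T] = \Phi$. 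Equivalently, and more cleanly: $T \cup -T$ already contains a symmetric closed set whose symmetric part spans all of $\mathfrak{h}^*$, so its closure is a parabolic-type subset equal to $\Phi$. Having established $\cww = \Phi$, Theorem \ref{theorem1} immediately gives that $\stt$ is cyclic wide, and since cyclic indecomposability of restricted modules is preserved under the isomorphisms and $G$-conjugations relating an abstract copy of the subalgebra to this standard regular model, every subalgebra isomorphic to one in Eq. \eqref{ghwer} is cyclic wide in its ambient algebra.

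The main obstacle is the bookkeeping in the second step: one must pin down, from \cite{dr15}, exactly which simple root $\alpha_k$ is deleted and exactly which root spaces constitute the radical in each of the three cases (the embeddings $\mathfrak{sl}_n \subset \mathfrak{sl}_{n+1}$, $\mathfrak{so}_{2n+1}\subset \mathfrak{so}_{2n+3}$, $\mathfrak{so}_{2n}\subset\mathfrak{so}_{2n+2}$ correspond to deleting an end node of the Dynkin diagram), and then verify the closure computation case by case — the $B$ and $D$ cases require a little care because of the short roots and the fork. An alternative that sidesteps some casework is to observe that in each case $T^r \cup T^u$ together with $-T^u$ generates, under taking sums that land in $\Phi$, the set $\Phi^{+}\cup\Phi^{-} = \Phi$ outright, since $T$ contains a set of roots whose $\mathbb{Z}$-span is the full root lattice and $-T^u$ supplies the missing negatives; invoking Lemma \ref{wideclosure} repeatedly then closes the argument without reference to the Weyl group. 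Either way, once $\cww = \Phi$ is in hand the corollary is an immediate consequence of Theorem \ref{theorem1}.
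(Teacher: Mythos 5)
Your proposal is correct and follows essentially the same route as the paper: invoke the classification in \cite{dr15} to realize each subalgebra as a regular Levi decomposable $\stt$, verify that $\cww=\Phi$, and apply Theorem \ref{theorem1}. The paper's proof simply asserts the condition $\cww=\Phi$ from the explicit root data of \cite{dr15} rather than carrying out your closure computation, which is harmless extra detail (and in these three cases every root involving the deleted end node has coefficient $1$, so $T\cup -T=\Phi$ already holds on the nose).
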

\begin{proof}
The subalgebras of $\mathfrak{sl}_{n+1}$, $\mathfrak{so}_{2n+3}$, and $\mathfrak{so}_{2n+2}$ isomorphic to $\mathfrak{sl}_{n} \inplus \mathbb{C}^{n}$, $\mathfrak{so}_{2n+1}\inplus \mathbb{C}^{2n+1}$, and $\mathfrak{so}_{2n}\inplus \mathbb{C}^{2n}$, respectively, were classified, up to inner automorphism in \cite{dr15}. All subalgebras  are regular Levi decomposable subalgebras and satisfy the condition 
 $\cww =\Phi$. Hence, by Theorem \ref{theorem1}, each subalgebra is cyclic wide. Note that the subalgebras are also perfect.
\end{proof}

We present two additional examples to further illustrate the utility of our results.

\begin{example}{\bf Some cyclic wide subalgebras of $\mathfrak{sl}_{n+1}$.}
The special linear algebra   $\mathfrak{sl}_{n+1}$ is a simple Lie algebra of rank $n$. It has simple roots $\Delta= \{ \alpha_1,..., \alpha_n\}$, and  positive roots 
\begin{equation}
\Phi^+= \{ \alpha_{p, q} \coloneqq \alpha_p+\cdots + \alpha_q~|~ 1 \leq p \leq q \leq n\}.
\end{equation}
Note that $\alpha_i =\alpha_{i,i}$. Define a closed subset $T_k$ of  $\Phi$, for all $k$ with $1\leq  k \leq n-1$, by specifying its symmetric and special parts, respectively:
\begin{equation}
\begin{array}{llll}
 T_k^r\coloneqq \{ \pm\alpha_{i,j}~|~   k+1 \leq i \leq j \leq n \}, \\
   T_k^u\coloneqq \{ \alpha_{i,j}~|~   1 \leq i \leq k, i\leq j \leq n \}.
   \end{array}
\end{equation}
Let $\mathfrak{t}_k$ be the subalgebra of $\mathfrak{h}$ generated by $[\mathfrak{g}_{\alpha}, \mathfrak{g}_{-\alpha}]$ 
for $\alpha\in T_k^r$, where $\mathfrak{g}= \mathfrak{sl}_{n+1}$.
Then, $\mathfrak{s}_{T_k, \mathfrak{t}_k}$ is a regular Levi decomposable subalgebra of $\mathfrak{sl}_{n+1}$.  Its Levi
factor $\mathfrak{s}_{T^r_k, \mathfrak{t}_k}$ is isomorphic to $\mathfrak{sl}_{n+1-k}$. Observe that the radical
$\mathfrak{s}_{T^u_1, 0}$ is abelian, but $\mathfrak{s}_{T^u_k, 0}$ is not abelian for $k>1$.

Further,  $T_k \cup -T_k =\Phi$. Hence, by Theorem \ref{theorem1},  $\mathfrak{s}_{T_k, \mathfrak{t}_k}$ is 
cyclic wide. Note that  $\mathfrak{s}_{T_1, \mathfrak{t}_1}$  is included in Corollary \ref{widecyclicb}.
\end{example}

\begin{example}{\bf Some cyclic wide subalgebras of non-simple, semisimple Lie algebras.} 
Let $\mathfrak{g}^i$ be a simple Lie algebra with corresponding root system $\Phi^i$ and Cartan subalgebra $\mathfrak{h}^i$, for each $i =1,2,...,m$.
Let  $T^i$ be a closed subset of $\Phi^i$, and $\mathfrak{t}^i$ 
the subalgebra of $\mathfrak{h}^i$ generated by $[\mathfrak{g}^i_{\alpha}, \mathfrak{g}^i_{-\alpha}]$ 
for each $\alpha\in (T^i)^r$. Suppose that $\mathfrak{s}_{T^i, \mathfrak{t}^i}$ is a regular Levi decomposable Lie algebra and $T^i \cup -T^i=\Phi^i$,  for each $i$. 
Then, 
\begin{equation}
\mathfrak{s}_{T^1,\mathfrak{t}^1} \oplus \cdots \oplus \mathfrak{s}_{T^m,\mathfrak{t}^m}
\end{equation} 
is a cyclic wide regular subalgebra of the semisimple Lie algebra $\mathfrak{g}^1 \oplus \cdots \oplus \mathfrak{g}^m$ by Theorem \ref{theorem1}.
\end{example}

\end{document}